\documentclass[11pt]{article}
\usepackage{amsmath,amsthm,amsfonts,amssymb,amscd,amsxtra}
\usepackage{url}
\usepackage[margin=1in]{geometry}
\usepackage{color}
\usepackage{graphicx}
\usepackage{caption}
\usepackage{subcaption}
\usepackage{anyfontsize}
\usepackage{pgfplots}
\pgfplotsset{compat=1.17}
\usepackage{algpseudocode}
\usepackage{mathtools}
\usepackage{algorithm}
\usepackage[numbers]{natbib}
\usepackage[brazilian]{cleveref}
\usepackage{comment}

\usepackage{bbm}

\algnewcommand\Or{\textbf{or} }
\algnewcommand\And{\textbf{and} }

\theoremstyle{plain}
\newtheorem{theorem}{Theorem}[section]      
\newtheorem{proposition}[theorem]{Proposition}
\newtheorem{lemma}[theorem]{Lemma}

\newtheorem{definition}[theorem]{Definition}

\newtheorem{remark}[theorem]{Remark}
\newtheorem{assumption}[theorem]{Assumption}

\def\argmin{\operatorname{argmin}}

\DeclareMathOperator{\Id}{Id}
\DeclareMathOperator{\Ima}{Im}

\DeclareMathOperator{\Ker}{Ker}

\DeclareMathOperator{\inte}{int}

\newcommand{\K}{\mathcal K}

\newcommand{\C}{\mathcal C}

\newcommand{\LL}{\mathbb L}

\newcommand{\sym}{\mathbb S}
\newcommand{\R}{\mathbbm R}

\newcommand{\N}{\mathbbm N}
\newcommand{\X}{\mathbbm X}
\newcommand{\Y}{\mathbbm Y}
\newcommand{\Z}{\mathbbm Z}

\numberwithin{equation}{section}

\begin{document}

\title{A semi-smooth Newton method for the nonlinear conic problem with generalized simplicial cones}
\author{Nicolas F. Armijo \thanks{Department of Applied Mathematics, State University of Campinas, Brazil (e-mail: {\tt
nfarmijo@ime.unicamp.br}).  The author was supported by Fapesp grant 2019/13096-2.}  
\and
Yunier  Bello-Cruz\thanks{Northern Illinois University, USA (e-mail: {\tt
yunierbello@niu.edu}).  The author was partially supported by the NSF Grant DMS-2307328 and by
an internal grant from NIU.}
\and
Gabriel Haeser \thanks{Department of Applied Mathematics, University of São Paulo, Brazil (e-mail: {\tt
ghaeser@ime.usp.br}). The author was supported by CNPq and Fapesp grant 2018/24293-0.}
}
\date{\today}
\maketitle
\vspace{-0.4 in}
\begin{abstract}
In this work we develop and analyze a semi-smooth Newton method for the
general nonlinear conic programming problem. In particular, we study
the problem with a generalized simplicial cone, i.e., the image of a
symmetric cone under a linear mapping. We generalize Robinson's normal
equations to a conic setting, yielding what we call the conic
projection equations. The resulting system is equivalent to the KKT
conditions associated with the nonlinear conic programming problem. A
semi-smooth Newton iteration is proposed for solving it, and local
quadratic convergence is established. We study properties of
generalized simplicial cones and prove strong semi-smoothness of the
projection operator onto them. Numerical experiments compare the
method against a recent smoothing Newton approach on the circular cone
programming problem, and we also apply it to the low-rank matrix
completion problem.

\medskip
\noindent
{\bf Keywords:} Conic programming, nonlinear programming, second-order
cone programming, circular cone programming, Robinson's normal
equations, semi-smooth Newton method.

\medskip
\noindent
{\bf 2010 AMS Subject Classification:} 90C33, 15A48.
\end{abstract}

\section{Introduction} \label{section_intro}

In this paper we study the general nonlinear conic programming (NCP)
problem:
\begin{equation}\label{gen_conic_nonlinconst}
\begin{array}{cl}
\min & f(x) \\
\text{s.t.} & g(x) \in \K,
\end{array}
\end{equation}
where $\X$ and $\Y$ are finite-dimensional inner product spaces,
$\K \subset \Y$ is a closed convex cone, and $f:\X \to \R$,
$g:\X \to \Y$ are twice continuously differentiable functions. The
nonlinear conic programming problem has been widely studied over the
last three decades, with significant contributions in the areas of
optimality conditions, constraint qualifications, and algorithms for
particular, yet important, instances. See \cite{bonnans,lobo}.

Our approach is based on what we call \emph{conic projection
equations}, which extend Robinson's normal equations
\cite{Robinson:1992}. Robinson formulated these equations in the
context of complementarity conditions for minimizing a function $f$
over a closed convex set $C$; they served as the foundation for
algorithms targeting variational inequalities, see, for example,
\cite{Qi:2000} for box-constrained problems and \cite{Zhou:2014} for
polyhedral convex set constrained problems.

In the nonlinear conic programming setting, analogous projection-based
systems were studied in \cite{Bello:2016_4}, where the authors
addressed the quadratic conic programming problem in the particular
case of the nonnegative cone. They employed a piecewise linear system to locate KKT points and a
semi-smooth Newton method to solve it; a related approach was developed
in \cite{Barrios:2016_4} for a special piecewise linear system arising
from positively constrained convex quadratic programming. This
formulation is also closely related to the absolute value equation
(AVE); see, for example, \cite{Bello:2016_4}. A more recent
contribution appears in \cite{Armijo:2023_4}, where Gauss--Seidel-type
and Jacobi-type methods were proposed for solving such piecewise linear
systems. In the same vein, \cite{Bello:2017_4} extended the nonlinear
system to the second-order cone, addressing the quadratic second-order
cone programming problem via a semi-smooth Newton method with global
convergence guarantees under assumptions on the Hessian of the
objective function. For the convex quadratic semidefinite programming
problem, the authors of \cite{Li:2018_4} proposed a two-phase
augmented Lagrangian method, resulting in the MATLAB package QSDPNAL.
The earlier work \cite{Qi:2006_4} studied the same problem and, based
on an equivalent formulation, proposed a Newton method. More recently,
\cite{Armijo:2025_4} addressed the non-convex quadratic conic
programming problem, focusing on the nearest correlation matrix problem
and proposing a locally quadratically convergent semi-smooth Newton
method using a different, easy-to-compute subdifferential.

Over time, many algorithms have been developed for the various
structures arising in conic programming. Interior-point methods, such
as \cite{Helmberg:1996_4} for semidefinite programming, are classical
in the area. Their efficiency, particularly for linear programming, has
made them a sustained area of research; see, for example,
\cite{Gill:2024_4} for recent developments. Augmented Lagrangian (AL)
methods offer an alternative approach: they approximately minimize a
penalized version of the classical Lagrangian by solving a subproblem
involving the constraint function, followed by updates of the multiplier and
penalty parameters. Notable AL methods related to our work include
SDPNAL+ \cite{Yang:2015_4} and QSDPNAL \cite{Li:2018_4}, which have
demonstrated efficiency for linear and quadratic semidefinite
programming. Augmented Lagrangian methods remain an active area of
research, receiving significant attention in nonlinear programming,
conic programming \cite{Fukuda:2024_4,Lu:2024_4,He:2023_4}, and
Riemannian optimization \cite{Andreani:2024a_4}.

The methods most directly related to this work are Newton-type methods
\cite{Izmailov:2014_4}, which solve the primal-dual complementarity
optimality conditions by approximating the zeros of a nonlinear system
of equations. A well-known variant employs smoothing functions to
approximate the complementarity conditions, with the properties of
these functions ensuring that solutions of the resulting nonlinear
system correspond to KKT points of the original problem. Smoothing
methods have been proposed recently for second-order cone programming
\cite{Tang:2022_4,Tang:2023_4,Chi:2016_4} and semidefinite programming
\cite{Liang:2023_4}. Additionally, \cite{Im:2024_4} studied a
semi-smooth Newton method for the nearest correlation matrix problem,
and \cite{Zhao:2022_4} proposed a Lagrange--Newton algorithm for
sparse nonlinear programming.

In this paper, we extended Robinson's normal equations to the setting
of nonlinear conic programming, yielding a system of \emph{conic
projection equations} that is equivalent to the KKT conditions of
problem~\eqref{gen_conic_nonlinconst} (Theorem~\ref{equiv_kkt_eq}). In
contrast to smoothing-based approaches, our formulation does not employ
a smoothing function; instead, it encodes complementarity through a
direct nonlinear system involving the conic projection.

We study the generalized simplicial cone $M\K$, defined as the image
of a symmetric cone $\K$ under a linear mapping $M$, and establish the
strong semi-smoothness of the projection onto $M\K$
(Theorem~\ref{metr_proj_str_semi}), including the case where $M$ is
rank-deficient. We also derive an explicit characterization of the dual
cone $(M\K)^*$ (Proposition~\ref{proj_simpli_semismth}) and provide
sufficient conditions for the closedness of $M\K$
(Propositions~\ref{suff_cond_simpli_close_1}
and~\ref{suff_cond_simpli_close_2}). Building on these results, we
propose a semi-smooth Newton method
(Algorithm~\ref{alg:plucg_semismooth}) for solving the conic
projection equations, incorporating a regularization strategy for
singular Jacobians and an escape mechanism for non-strongly stationary
points. We prove local quadratic convergence (Theorem~\ref{thm_quad_conv}).

We evaluate the proposed method on two classes of problems: circular
cone programming and low-rank matrix completion. Second-order cone
programming (SOCP) has numerous applications in areas such as control,
signal processing, and finance, and its structure, variants, and
solution methods have been extensively studied; see, for example,
\cite{Chen:2025,Chen:2025_2,Garrido:2025,Lin:2025,SilveiraHaeser:2025,Zheng:2024}.
Our focus is on circular cone programming, a natural generalization of
SOCP, and specifically on Newton-type methods, in particular the
smoothing method of~\cite{Tang:2022_4}. The semi-smooth Newton method
developed here achieves comparable or better precision than
\cite{Tang:2022_4}, while being up to two orders of magnitude faster.

The low-rank matrix completion problem is intrinsically challenging due
to the combinatorial nature of the rank constraint, which significantly
complicates the design of high-precision algorithms. Our interest in
this problem was motivated by the reformulation
of~\cite{Bertsimas:2021_4}, which replaces the rank constraint with
continuous matricial constraints, enabling a purely continuous
optimization approach. Leveraging this reformulation, we apply the
proposed semi-smooth Newton method and compute high-precision KKT
points for more than $94\%$ of the tested instances.

The paper is organized as follows. In Section~\ref{sec:preliminaries3},
we present preliminary results, including properties of generalized
simplicial cones. Section~\ref{section_ncp_problem} reviews the
nonlinear conic programming problem and introduces the generalized
system of projection equations, establishing its equivalence with the
KKT conditions. In Section~\ref{section_semismoothNewt}, we develop the
semi-smooth Newton method and analyze its convergence properties.
Finally, in Section~\ref{section_num_exps}, we report numerical
experiments on circular cone programming and low-rank matrix
completion.

\section{Preliminaries} \label{sec:preliminaries3}

Let $\X$ and $\Y$ be finite-dimensional normed vector spaces equipped with an inner product, which we indistinctly denote by $\langle \cdot, \cdot \rangle$. Let $\Id$ denote the identity operator. Given a linear operator $M : \X \rightarrow \Y$, its operator norm is defined by $\|M\| := \sup\{\|Mx\| : x \in \X,\, \|x\|=1 \}$, and its adjoint $M^* : \Y \rightarrow \X$ is the unique linear operator satisfying $\langle Mx, y \rangle = \langle x, M^* y \rangle$ for all $x \in \X$ and $y \in \Y$. We say that $M$ is positive semidefinite (resp.\ positive definite) if $\langle Mx, x \rangle \geq 0$ (resp.\ $> 0$) for all $x \in \X$. We denote by $\sym^n$ the set of $n \times n$ real symmetric matrices, by $\sym^n_+$ the subset of positive semidefinite matrices, and by $\LL^n$ the second-order cone. For a fixed cone $\K \subset \X$, the dual and polar cones are defined by
$$\K^* := \{y \in \X : \langle y, x \rangle \geq 0,\; \forall\, x \in \K\}
\quad \text{and} \quad
\K^{\circ} := \{y \in \X : \langle y, x \rangle \leq 0,\; \forall\, x \in \K\} = -\K^*,$$
respectively.

\begin{lemma}[Spectral properties of symmetric matrices; 
  see e.g.\ {\cite[Ch.~4]{Horn:2013_4}}]\label{eigen_mat}
Let $E \in \sym^n$ and let $\lambda_{\rm min}(E)$, $\lambda_{\max}(E)$ denote the smallest and largest eigenvalues of $E$, respectively. Then:
\begin{enumerate}
\item $x^T E x \leq \lambda_{\max}(E)\, x^T x$ for all $x \in \R^n$.
\item If $E$ is positive definite, then $\lambda_{\max}(E^{-1}) = 1 / \lambda_{\rm min}(E)>0$.
\end{enumerate}
\end{lemma}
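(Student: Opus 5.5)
The natural route is the spectral theorem for real symmetric matrices. Since $E \in \sym^n$, there is an orthogonal matrix $Q$ (so $Q^TQ = QQ^T = \Id$) and a diagonal matrix $\Lambda = \mathrm{diag}(\lambda_1,\dots,\lambda_n)$ with $E = Q\Lambda Q^T$. The eigenvalues $\lambda_i$ are real, and after reordering we may assume $\lambda_{\min}(E)=\lambda_1\le\cdots\le\lambda_n=\lambda_{\max}(E)$. Both items then reduce to statements about the diagonal matrix $\Lambda$.

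For item 1, fix $x\in\R^n$ and set $y = Q^Tx$. Because $Q$ is orthogonal, $y^Ty = x^TQQ^Tx = x^Tx$. We then compute
\[
x^TEx = y^T\Lambda y = \sum_{i=1}^n \lambda_i y_i^2 \le \lambda_{\max}(E)\sum_{i=1}^n y_i^2 = \lambda_{\max}(E)\, x^Tx ,
\]
where the inequality holds termwise because $y_i^2\ge 0$ and $\lambda_i\le\lambda_n$.

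For item 2, first note that positive definiteness forces every $\lambda_i>0$. Indeed, taking $x = Qe_i$ gives $0 < x^TEx = \lambda_i$. Hence $\Lambda$ is invertible and $E^{-1} = Q\Lambda^{-1}Q^T$, with $\Lambda^{-1}=\mathrm{diag}(1/\lambda_1,\dots,1/\lambda_n)$. So $E^{-1}$ is symmetric and its eigenvalues are exactly $1/\lambda_i$. Since $t\mapsto 1/t$ is decreasing on $(0,\infty)$, the largest of these is $1/\lambda_1 = 1/\lambda_{\min}(E)$, and it is positive because $\lambda_1>0$.

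There is no real obstacle here. The only points that need care are the orthogonal invariance $\|Q^Tx\|=\|x\|$ in item 1, and, in item 2, checking that all eigenvalues are positive before inverting, so that the monotonicity of $1/t$ applies.
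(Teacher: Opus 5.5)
Your proof is correct; the paper gives no proof of its own here and simply cites Horn and Johnson (Ch.~4), where the Rayleigh--Ritz bound $x^TEx\le\lambda_{\max}(E)\,x^Tx$ and the eigenvalues of $E^{-1}$ come from the same orthogonal diagonalization $E=Q\Lambda Q^T$ that you use. Checking $\lambda_i=(Qe_i)^TE(Qe_i)>0$ before inverting is the right step, since it is what lets you use that $t\mapsto 1/t$ is decreasing on $(0,\infty)$.
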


The projection of a point $x$ onto a closed convex set $\K$ is defined by $\Pi_{\K}(x) := \argmin\{\|y - x\| : y \in \K\}$. For a function $f$, we denote by $D_f$ the set of points where $f$ is differentiable, by $Df(x)$ its differential at $x$, and by $f'(x)$ its Jacobian. The Clarke generalized Jacobian is denoted by $\partial_C f(x)$ and defined as
$$\partial_C f(x) := \operatorname{conv}\left\{\lim_{k \to \infty} f'(x_k) : x_k \to x,\; x_k \in D_f \right\}.$$
Throughout this work, we write $V_{\K}(x)$ for a generic element of $\partial_C \Pi_{\K}(x)$ and $V_f(x)$ for a generic element of $\partial_C f(x)$.

A key notion for our analysis is the strong semi-smoothness of a function. A Lipschitz function $f : \X \to \Y$ is said to be \emph{strongly semi-smooth} at $x$ if
$$f(x+h) - f(x) - V_f(x+h)\, h = O(\|h\|^2)$$
for all $h \in \X$ and all $V_f(x+h) \in \partial_C f(x+h)$. We say that $f$ is strongly semi-smooth if it is strongly semi-smooth at every $x \in \X$.

The following two classical results are used in our analysis.

\begin{theorem}[Contraction mapping principle {\cite[Thm.~8.2.2, p.~153]{Ortega:1987_4}}] \label{fixedpoint3}
Let $\Phi : \R^n \to \R^n$ be a function. Suppose there exists $\lambda \in [0,1)$ such that $\|\Phi(y) - \Phi(x)\| \leq \lambda\, \|y - x\|$ for all $x, y \in \R^n$. Then there exists a unique $\bar{x} \in \R^n$ such that $\Phi(\bar{x}) = \bar{x}$.
\end{theorem}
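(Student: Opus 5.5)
The plan is the classical Banach--Picard argument: existence comes from iterating $\Phi$ from an arbitrary point, and uniqueness comes directly from the contraction inequality. We fix any $x_0 \in \R^n$ and define the sequence $x_{k+1} := \Phi(x_k)$. Applying the hypothesis repeatedly gives
$$\|x_{k+1}-x_k\| \le \lambda\,\|x_k-x_{k-1}\| \le \cdots \le \lambda^k \|x_1-x_0\|$$
for every $k\ge 0$.

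The first step is to show that $\{x_k\}$ is Cauchy. For $m>k$, the triangle inequality and the geometric series give
$$\|x_m-x_k\| \le \sum_{j=k}^{m-1}\lambda^j\|x_1-x_0\| \le \frac{\lambda^k}{1-\lambda}\|x_1-x_0\|.$$
The right-hand side tends to $0$ as $k\to\infty$ because $\lambda\in[0,1)$. Since $\R^n$ is complete, $x_k\to\bar x$ for some $\bar x \in \R^n$.

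Next we check that $\bar x$ is a fixed point. The contraction inequality makes $\Phi$ Lipschitz, hence continuous. Passing to the limit in $x_{k+1}=\Phi(x_k)$ therefore yields $\bar x=\Phi(\bar x)$.

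Finally, uniqueness. Suppose $\bar x$ and $\bar y$ are both fixed points. Then
$$\|\bar x-\bar y\| = \|\Phi(\bar x)-\Phi(\bar y)\| \le \lambda\|\bar x-\bar y\|.$$
Rearranging gives $(1-\lambda)\|\bar x-\bar y\|\le 0$, and since $1-\lambda>0$ this forces $\bar x=\bar y$.

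There is no serious obstacle here. The only point needing any care is the Cauchy estimate, which rests on summing the geometric series and uses $\lambda<1$ essentially. It is also the one place where completeness of $\R^n$ is invoked.
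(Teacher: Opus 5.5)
Your proof is correct and complete. It is the classical Banach--Picard iteration argument: geometric decay of $\|x_{k+1}-x_k\|$, the Cauchy estimate $\|x_m-x_k\|\le \lambda^k\|x_1-x_0\|/(1-\lambda)$, continuity to pass to the limit, and the one-line uniqueness bound. The paper gives no proof of its own and simply cites Ortega for this standard result, whose standard proof is essentially this argument.
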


\begin{theorem}[Mean value theorem {\cite[Prop.~2.6.5, p.~72]{Clarke:1990_4}}] \label{meanval_theo3}
Let $\X, \Y$ be finite-dimensional normed spaces and let $f : \X \rightarrow \Y$ be a Lipschitz function. Then,
$$f(y) - f(x) \in \operatorname{conv}\!\left(\partial_C f([x,y])\cdot(y-x)\right),$$
that is, $f(y) - f(x) = h$ for some $h$ in the convex hull of the set $\{V(y-x) : V \in \partial_C f(z),\; z = tx + (1-t)y,\; t \in [0,1]\}$.
\end{theorem}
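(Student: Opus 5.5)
The statement just above is the Clarke mean value theorem for a Lipschitz map $f:\X\to\Y$ between finite-dimensional spaces. I would prove it by scalarizing along the segment and arguing by separation. Set $Z := \operatorname{conv}\left(\partial_C f([x,y])\cdot(y-x)\right)$.

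\textbf{Compactness of $Z$.} For each $z$ the set $\partial_C f(z)$ is nonempty, convex and compact: it is bounded by the Lipschitz constant $L$, and Rademacher's theorem guarantees that differentiability points accumulate at every $z$. The set-valued map $z\mapsto\partial_C f(z)$ is upper semicontinuous, by a diagonal argument on the limiting Jacobians. Hence the image of the compact segment $[x,y]$ under this map is compact. Applying $V\mapsto V(y-x)$ and taking convex hulls keeps it compact (Carathéodory's theorem in finite dimensions). So $Z$ is a compact convex set.

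\textbf{Reduction by separation.} Suppose $f(y)-f(x)\notin Z$. Then there is $w\in\Y$ with
$$\langle w, f(y)-f(x)\rangle > \max_{\zeta\in Z}\langle w,\zeta\rangle .$$

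\textbf{Scalar problem.} Define $\varphi(t):=\langle w, f(x+t(y-x))\rangle$ on $[0,1]$. This $\varphi$ is Lipschitz, so it is absolutely continuous and differentiable almost everywhere, with
$$\varphi(1)-\varphi(0)=\int_0^1\varphi'(t)\,dt .$$
It then suffices to show that $\varphi'(t)\le\max_{V\in\partial_C f(z_t)}\langle w, V(y-x)\rangle$ wherever $\varphi'(t)$ exists, where $z_t=x+t(y-x)$. Integrating this bound contradicts the separation inequality.

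\textbf{Main obstacle: the chain-rule bound.} The difficulty is that the segment may lie entirely inside the null set where $f$ fails to be differentiable, so $\varphi'(t)$ cannot simply be read off a Jacobian of $f$. To handle this, I would bound $\varphi'(t)$ by the Clarke directional derivative of $g:=\langle w,f(\cdot)\rangle$ at $z_t$ in direction $d=y-x$:
$$g^\circ(z_t;d)=\limsup_{z\to z_t,\ s\downarrow 0}\frac{g(z+sd)-g(z)}{s}.$$
I would then show that $g^\circ(z_t;d)\le\max_{V\in\partial_C f(z_t)}\langle w,Vd\rangle$ by a Fubini perturbation. Replace the segment through $z$ by nearby parallel segments that meet the non-differentiability set in a set of measure zero; this is possible for almost every parallel translate, by Fubini and Rademacher's theorem. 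On such a segment,
$$g(z+sd)-g(z)=\int_0^s\langle w, f'(z+\tau d)d\rangle\,d\tau .$$
Here the Jacobians $f'$ are evaluated at differentiability points close to $z_t$, so their limit points lie in $\partial_C f(z_t)$ by definition. Letting the perturbation shrink and using continuity of $g$ gives the bound. Since $\varphi'(t)\le g^\circ(z_t;d)$ by definition, this completes the contradiction.

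The Fubini perturbation step is the delicate part. If it were unavailable, I would instead cite Clarke's characterization $g^\circ(z;d)=\max\{\langle\xi,d\rangle:\xi\in\partial_C g(z)\}$ together with $\partial_C g(z)\subset w^T\partial_C f(z)$.
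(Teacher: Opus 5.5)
Your proof is correct: the compactness of the hull, strict separation, integration of $\varphi'$ along the original segment, and the Rademacher--Fubini perturbation bounding $g^\circ(z_t;d)$ by $\max_{V\in\partial_C f(z_t)}\langle w,Vd\rangle$ fit together without gaps. The one point worth making explicit is that $\limsup_{u\to z_t,\,u\in D_f}\langle w,f'(u)d\rangle$ is realized along a sequence whose Jacobians are bounded by the Lipschitz constant and hence converge to some $V\in\partial_C f(z_t)$. The paper gives no proof of its own and simply cites Clarke's Proposition~2.6.5. Your route, which uses separation to reduce to the scalar function $\langle w,f\rangle$ and Fubini/Rademacher to get around the non-differentiability set, is essentially the standard argument for that result.
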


We conclude this section by presenting key properties of the projection onto a closed convex cone.

\begin{theorem}[Properties of the conic projection and its generalized Jacobian; 
  see e.g.\ {\cite[Thm.~2.1]{Armijo:2025_4}}]\label{exis_diff_proj2}
Let $\X$ be a finite-dimensional normed vector space and let $\K \subset \X$ be a closed convex cone. Then the projection $\Pi_{\K}$ is differentiable almost everywhere, and both the Jacobian $\Pi'_{\K}(x)$ and any element $V_{\K}(x) \in \partial_C \Pi_{\K}(x)$ is a self-adjoint positive semidefinite linear operator satisfying the following properties:
\begin{enumerate}
\item \label{norm_diff122} $\|V_{\K}(x)\| \leq 1$ for all $V_{\K}(x) \in \partial_C \Pi_{\K}(x)$ and all $x \in \X$.
\item \label{Pxx=Px2} $\Pi_{\K}'(x)\, x = \Pi_{\K}(x)$ for all $x \in D_{\Pi_{\K}}$.
\item \label{Vxx=Px2} $V_{\K}(x)\, x = \Pi_{\K}(x)$ for all $V_{\K}(x) \in \partial_C \Pi_{\K}(x)$.
\item \label{eigen_gendiff2} The eigenvalues of $\Pi'_{\K}(x)$ satisfy
$$ 0 \leq \lambda_{\rm min}(\Pi'_{\K}(x)) \leq \lambda_{\max}(\Pi'_{\K}(x)) \leq 1.$$
Since eigenvalues are continuous functions of the matrix entries, the same bounds hold for any $V_{\K}(x) \in \partial_C \Pi_{\K}(x)$:
$$ 0 \leq \lambda_{\rm min}(V_{\K}(x)) \leq \lambda_{\max}(V_{\K}(x)) \leq 1.$$
\end{enumerate}
\end{theorem}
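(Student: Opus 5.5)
We first record the basic facts about $\Pi_{\K}$, then handle the Jacobian at points of differentiability, and finally pass to limits and convex hulls for the Clarke Jacobian. The projection onto a closed convex set is firmly nonexpansive:
$$\langle \Pi_{\K}(x)-\Pi_{\K}(y), x-y\rangle \ge \|\Pi_{\K}(x)-\Pi_{\K}(y)\|^2 .$$
In particular it is $1$-Lipschitz, so Rademacher's theorem gives differentiability almost everywhere and makes $\partial_C\Pi_{\K}(x)$ nonempty.

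Fix $x\in D_{\Pi_{\K}}$ and set $J:=\Pi'_{\K}(x)$. We take $y=x+th$ and divide the firm nonexpansiveness inequality by $t^2$. Letting $t\to 0$ gives
$$\langle Jh,h\rangle \ge \|Jh\|^2 \quad \text{for all } h .$$
Hence $J$ is positive semidefinite and $\|Jh\|^2\le \|Jh\|\,\|h\|$, so $\|J\|\le 1$.

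Self-adjointness is the main obstacle. The route we would try first uses the Moreau-type fact that $\Pi_{\K}=\nabla \phi$ with $\phi(x)=\tfrac12\|x\|^2-\tfrac12 d_{\K}(x)^2$, where $\phi$ is convex and $C^{1,1}$. Then $\Pi_{\K}$ is a Lipschitz gradient map. Its derivative $J$ at a point of differentiability is the second-order derivative of $\phi$ there, and we argue it is symmetric. One way is to note that a Lipschitz gradient field has symmetric distributional Jacobian. Since that Jacobian equals the classical derivative almost everywhere, symmetry holds a.e. The defining limits in $\partial_C$ can be restricted to such points, because the Clarke Jacobian is blind to null sets. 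Alternatively, we would simply invoke the cited \cite[Thm.~2.1]{Armijo:2025_4}.

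For item~\ref{Pxx=Px2} we use positive homogeneity: $\Pi_{\K}(tx)=t\Pi_{\K}(x)$ for $t>0$. Differentiating in $t$ at $t=1$ gives $\Pi'_{\K}(x)x=\Pi_{\K}(x)$. For a Clarke element, first take a limit $V=\lim \Pi'_{\K}(x_k)$ with $x_k\to x$ and $x_k\in D_{\Pi_{\K}}$. Then $Vx=\lim \Pi'_{\K}(x_k)x_k=\lim\Pi_{\K}(x_k)=\Pi_{\K}(x)$, using boundedness and continuity. This gives item~\ref{Vxx=Px2} for limiting Jacobians. Since the identity is affine in $V$, it passes to convex combinations.

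Similarly, the properties ``self-adjoint'', ``positive semidefinite'' and ``$\|V\|\le1$'' are closed and convex conditions on $V$. Therefore they pass from the Jacobians $\Pi'_{\K}(x_k)$ to their limits, and then to the convex hull, which yields item~\ref{norm_diff122}. Finally, for a self-adjoint positive semidefinite $V$ with $\|V\|\le 1$, the eigenvalues are real and lie in $[0,\|V\|]\subset[0,1]$. This gives item~\ref{eigen_gendiff2} directly, without needing the continuity-of-eigenvalues remark.
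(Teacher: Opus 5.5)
Your argument is correct. It differs from the paper only in that the paper gives no proof at all and simply defers to \cite[Thm.~2.1]{Armijo:2025_4}, whereas yours is self-contained. Three of your steps are sound as written: $\langle Jh,h\rangle\ge\|Jh\|^2$ from firm nonexpansiveness, $\Pi'_{\K}(x)x=\Pi_{\K}(x)$ from positive homogeneity, and the transfer of each property to $\partial_C\Pi_{\K}(x)$ as a closed convex (or affine) condition on $V$. The last of these is more complete than the paper's eigenvalue-continuity remark, which justifies passing to limits but not to convex combinations.

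The one step where you work harder than needed is self-adjointness. Your distributional argument gives symmetry only almost everywhere. You therefore have to invoke the fact that the Clarke Jacobian of a vector-valued Lipschitz map is blind to null sets, which is true but nontrivial (Fabian--Preiss). You also need an unstated remark to recover the claim at \emph{every} $x\in D_{\Pi_{\K}}$: that $\Pi'_{\K}(x)\in\partial_C\Pi_{\K}(x)$, obtained from the constant sequence $x_k=x$.

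A more elementary route gives symmetry at every differentiability point directly. Since $\phi$ is differentiable on all of $\X$ with $\nabla\phi=\Pi_{\K}$, apply the mean value theorem to $s\mapsto\phi(x+sh+k)-\phi(x+sh)-s\langle Jk,h\rangle$ on $[0,1]$. Differentiability of $\Pi_{\K}$ at $x$ makes its derivative $o(\|h\|+\|k\|)\,\|h\|$, so $\phi(x+h+k)-\phi(x+h)-\phi(x+k)+\phi(x)=\langle Jk,h\rangle+o((\|h\|+\|k\|)^2)$. The left side is symmetric in $(h,k)$, so $\langle Jk,h\rangle-\langle Jh,k\rangle=o((\|h\|+\|k\|)^2)$, and degree-two homogeneity forces it to vanish. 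This is Schwarz's theorem in Dieudonn\'e's form. With it, self-adjointness passes to limits and convex hulls exactly like your other closed convex conditions, and the blindness theorem is not needed.
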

\begin{lemma}[Linearization error bound for the conic projection; 
  {\cite[Lem.~2.1]{Armijo:2025_4}}] \label{norm_diff3}
Let $x, y \in \X$ and $V_{\K}(x) \in \partial_C \Pi_{\K}(x)$. Then \[\|\Pi_{\K}(y) - \Pi_{\K}(x) - V_{\K}(x)(y-x)\| \leq \|y - x\|.\]
\end{lemma}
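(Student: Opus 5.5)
We must show $\|\Pi_{\K}(y)-\Pi_{\K}(x)-V_{\K}(x)(y-x)\|\le\|y-x\|$ for every $V_{\K}(x)\in\partial_C\Pi_{\K}(x)$. The plan is to exploit the conic structure via Theorem~\ref{exis_diff_proj2}(\ref{Vxx=Px2}). By that item, $V_{\K}(x)x=\Pi_{\K}(x)$, so the expression inside the norm simplifies:
\[
\Pi_{\K}(y)-\Pi_{\K}(x)-V_{\K}(x)(y-x)=\Pi_{\K}(y)-V_{\K}(x)\,y .
\]
The task therefore reduces to bounding $\|\Pi_{\K}(y)-V_{\K}(x)y\|$ by $\|y-x\|$.

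I would not bound this term directly. Instead, the key is to write $V:=V_{\K}(x)$ and use the triangle-type identity
\[
\Pi_{\K}(y)-Vy=\bigl(\Pi_{\K}(y)-\Pi_{\K}(x)\bigr)-V(y-x).
\]
This still gives only the trivial bound $2\|y-x\|$, using nonexpansiveness of $\Pi_{\K}$ and $\|V\|\le 1$. To obtain the sharper constant $1$, I would use the spectral structure of $V$. By Theorem~\ref{exis_diff_proj2}, $V$ is self-adjoint with eigenvalues in $[0,1]$, so $\Id-V$ is also self-adjoint positive semidefinite with norm at most $1$.

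The next step is to use the decomposition $y=\Pi_{\K}(y)+\Pi_{\K^\circ}(y)$ (Moreau). Then
\[
\Pi_{\K}(y)-Vy=(\Id-V)\Pi_{\K}(y)-V\Pi_{\K^\circ}(y).
\]
The idea is to compare each piece with the corresponding decomposition of $x$. One expects $(\Id-V)\Pi_{\K}(x)=0$, since $Vx=\Pi_{\K}(x)$ and, in the differentiable case, $\Pi_{\K}'(x)$ fixes $\Pi_{\K}(x)$ by positive homogeneity. Similarly, one expects $V\Pi_{\K^\circ}(x)=0$. These facts hold at differentiability points: $\Pi_{\K}$ is positively homogeneous, so its derivative along the ray gives $\Pi_{\K}'(x)\Pi_{\K}(x)=\Pi_{\K}(x)$. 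They would then pass to Clarke elements by taking limits and convex combinations. Granting this, the expression becomes
\[
(\Id-V)\bigl(\Pi_{\K}(y)-\Pi_{\K}(x)\bigr)-V\bigl(\Pi_{\K^\circ}(y)-\Pi_{\K^\circ}(x)\bigr).
\]

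The main obstacle is the final norm estimate. For commuting self-adjoint $A=\Id-V$ and $B=V$ with $0\le A,B$ and $A+B=\Id$, set $u=\Pi_{\K}(y)-\Pi_{\K}(x)$ and $w=\Pi_{\K^\circ}(y)-\Pi_{\K^\circ}(x)$. Then
\[
\|Au-Bw\|^2\le \|u\|^2+\|w\|^2,
\]
which follows by diagonalizing, since the scalar inequality $(a u_i - b w_i)^2\le u_i^2+w_i^2$ holds when $a,b\in[0,1]$ and $a+b=1$. In addition, the firm nonexpansiveness of the Moreau pair gives $\|u\|^2+\|w\|^2\le\|y-x\|^2$. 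Indeed, $\langle u,w\rangle\ge0$, because $\langle \Pi_{\K}(y),\Pi_{\K^\circ}(x)\rangle\le0$ and the cross terms vanish. Combining these yields $\|u\|^2+\|w\|^2\le\|u+w\|^2=\|y-x\|^2$, which gives the claim.

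The delicate step is verifying that the identities $V\Pi_{\K}(x)=\Pi_{\K}(x)$ and $V\Pi_{\K^\circ}(x)=0$ survive Clarke convexification. If they fail, the fallback is to argue first at differentiability points and then extend by a limiting argument.
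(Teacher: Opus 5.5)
Your argument is correct, but one step is mis-justified, and the step you flag as delicate is not the problematic one. Positive homogeneity only gives $\Pi_{\K}'(x)x=\Pi_{\K}(x)$, the derivative along the ray through $x$. It does not give $\Pi_{\K}'(x)\Pi_{\K}(x)=\Pi_{\K}(x)$, and for $V\Pi_{\K^\circ}(x)=0$ you give no reason at all.

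What is needed is uniqueness in Moreau's decomposition. For $t>-1$, both $x+t\Pi_{\K}(x)=(1+t)\Pi_{\K}(x)+\Pi_{\K^\circ}(x)$ and $x+t\Pi_{\K^\circ}(x)=\Pi_{\K}(x)+(1+t)\Pi_{\K^\circ}(x)$ are orthogonal splittings into $\K$ and $\K^\circ$. Hence $\Pi_{\K}(x+t\Pi_{\K}(x))=(1+t)\Pi_{\K}(x)$ and $\Pi_{\K}(x+t\Pi_{\K^\circ}(x))=\Pi_{\K}(x)$ on an open interval around $t=0$. Differentiating at $t=0$ gives both identities at points of $D_{\Pi_{\K}}$.

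The extension to $\partial_C\Pi_{\K}(x)$ is then routine. The identities at $x_k\to x$ pass to B-limits by continuity of $\Pi_{\K}$ and $\Pi_{\K^\circ}=\Id-\Pi_{\K}$. They are linear in $V$ with right-hand sides independent of $V$, so they survive convex combinations. With that fixed, the rest is sound: the eigenbasis bound via $(1-\lambda)^2+\lambda^2\le 1$ and the inequality $\langle u,w\rangle\ge 0$.

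The paper does not prove this lemma; it only cites it. With the tools it lists just before, there is a shorter route. By Theorem~\ref{meanval_theo3}, $\Pi_{\K}(y)-\Pi_{\K}(x)=W(y-x)$ for some convex combination $W$ of generalized Jacobians at points of $[x,y]$. By Theorem~\ref{exis_diff_proj2}, $W$ is self-adjoint with spectrum in $[0,1]$, and so is $V$. Then $-\Id\preceq W-V\preceq\Id$, hence $\|W-V\|\le 1$, and the bound follows.

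That argument uses nothing about $\K$ beyond those spectral bounds, which hold for the projection onto any closed convex set, but it needs generalized Jacobians along the whole segment. Yours works with the single operator $V$ at $x$ and genuinely exploits the cone structure. In exchange it gives the sharper estimate $\|\Pi_{\K}(y)-\Pi_{\K}(x)-V(y-x)\|^2\le\|\Pi_{\K}(y)-\Pi_{\K}(x)\|^2+\|\Pi_{\K^\circ}(y)-\Pi_{\K^\circ}(x)\|^2$.
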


\subsection{Generalized simplicial cones} \label{section_gensimpli}

\hspace{\parindent} In this section we study properties of the generalized simplicial cone, that is, cones in the form $M\K$, where $M\colon \X \to \Y$ is a linear mapping and $\K \subset \X$ is a symmetric cone. A simplicial cone is defined as a cone in which every generator consists solely of extreme rays. However, while finitely generated simplicial cones are typically represented as $M\R^n_+$, we use the term ``generalized'' here in the sense that we study a general cone $\K$ instead of $\R^n_+$. One motivation for this study is the circular cone programming problem, where the cone constraint is defined by a linear transformation of the second-order cone. To develop methods that utilize projection onto these types of sets, it is essential first to understand the topological properties of the resulting generalized simplicial cone and, most importantly, how to project onto it.

\subsubsection{On the closedness of $M\K$}

The linearity of $M$ preserves the algebraic properties of $M\K$;
however, the topological structure may change. For the projection onto
a set to be well defined and unique, a sufficient condition is that the
set be closed and convex. While convexity is maintained, the generalized
simplicial cone $M\K$ may not always be closed; see, for example,
\cite{Andreani:2023_4, Drusvyatskiy:2015_4}, where it is shown that
both the second-order and the semidefinite cone can lose closedness
under linear transformations. Characterizing when the image of a cone
under a linear map remains closed is a subtle question that has
received considerable attention; see, for example,
\cite{Rockafellar:1970_4}. The following two results provide sufficient
conditions, each involving the relationship between $\Ker(M)$ and $\K$.

\begin{proposition}[Closedness of the image of a cone under a linear map;
  {\cite[Thm.~9.1]{Rockafellar:1970_4}}]\label{suff_cond_simpli_close_1}
Let $M:\X \to \Y$ be a linear mapping, let $\K \subset \X$ be a closed
convex cone, and let $\hat{\K}:=M\K$. If $\Ker(M) \cap \K = \{0\}$,
then $\hat{\K}$ is closed.
\end{proposition}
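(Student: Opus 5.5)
The plan is a sequential-compactness argument that uses the trivial intersection $\Ker(M)\cap\K=\{0\}$ to rule out escape to infinity. Take a sequence $y_k \in \hat{\K}$ with $y_k \to y \in \Y$; we must show $y \in \hat{\K}$. Write $y_k = M x_k$ with $x_k \in \K$. If $(x_k)$ has a bounded subsequence, then by finite dimensionality and closedness of $\K$ a further subsequence converges to some $\bar x \in \K$. Continuity of $M$ then gives $y = M\bar x \in \hat{\K}$.

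It therefore remains to rule out $\|x_k\| \to \infty$, and this is the main step. Suppose, passing to a subsequence, that $\|x_k\|\to\infty$ and $x_k \neq 0$. Consider the normalized points $u_k := x_k/\|x_k\|$. Since $\K$ is a cone, each $u_k$ lies in $\K$, and it also lies on the unit sphere, which is compact. Extract a convergent subsequence $u_k \to \bar u$. Then $\|\bar u\| = 1$, and $\bar u \in \K$ because $\K$ is closed. On the other hand,
\[
M u_k = \frac{y_k}{\|x_k\|} \to 0,
\]
because $(y_k)$ converges and is therefore bounded, while $\|x_k\|\to\infty$. By continuity $M\bar u = 0$, so $\bar u \in \Ker(M)\cap\K$ with $\bar u\neq 0$. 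This contradicts the hypothesis $\Ker(M)\cap\K=\{0\}$.

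The only delicate point is the choice of preimages. The argument needs just some choice $x_k$ with $Mx_k=y_k$, and the dichotomy between a bounded subsequence and norms tending to infinity covers every sequence in finite dimensions. No minimal-norm selection is required, because the hypothesis forces every choice of preimages to be bounded. Convexity of $\hat{\K}$ and its cone property follow directly from linearity of $M$, so closedness is the only claim needing proof.
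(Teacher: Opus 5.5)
Your proof is correct, and it takes a different route from the paper. The paper gives no argument of its own: it invokes Rockafellar's Theorem~9.1, a closedness criterion for linear images of arbitrary convex sets $C$, phrased through the recession cone $0^+C$. That theorem says that if $C$ is closed and the only $z\in 0^+C$ with $Mz=0$ is $z=0$, then $MC$ is closed. More generally, such $z$ are allowed provided they lie in the lineality space of $C$. Since a closed convex cone is its own recession cone, $0^+\K=\K$, and the criterion reduces exactly to $\Ker(M)\cap\K=\{0\}$.

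Your normalization-plus-compactness argument proves this special case directly. It is elementary and self-contained. It uses only finite dimensionality, closedness of $\K$, and invariance under positive scaling, so it holds verbatim for any closed cone, convex or not. It also gives a quantitative version. When $\K\neq\{0\}$, compactness of $\{u\in\K:\|u\|=1\}$ gives
\[
c:=\min\{\|Mu\| : u\in\K,\ \|u\|=1\}>0,
\]
hence $\|x\|\le \|Mx\|/c$ for all $x\in\K$. Every choice of preimages of a convergent sequence is then bounded at once, and you can skip the bounded/unbounded dichotomy.

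The citation offers more generality: arbitrary closed convex sets, the identity $\operatorname{cl}(MC)=M(\operatorname{cl}C)$, and the weaker lineality-space hypothesis. That hypothesis matters only for non-pointed cones (for example a half-space projected along its boundary), so it adds nothing for the symmetric cones used later in the paper.
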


The previous result requires that the kernel and the cone intersect
trivially. The next proposition shows that, for symmetric cones, a
complementary condition, the kernel meeting the interior of the
cone, also guarantees closedness, albeit with a stronger conclusion.

\begin{proposition}[Closedness via interior intersection with the kernel]
  \label{suff_cond_simpli_close_2}
Let $\K \subset \X$ be a symmetric cone and let $M \colon \X \to \Y$
be a linear mapping with $\operatorname{rank}(M) = r$. If
$\Ker(M) \cap \inte(\K) \neq \emptyset$, then
$\hat{\K} = \Ima(M) \simeq \R^r$. In particular, $\hat{\K}$ is closed.
\end{proposition}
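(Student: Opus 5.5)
The plan is to prove the two inclusions $\hat{\K} \subseteq \Ima(M)$ and $\Ima(M) \subseteq \hat{\K}$. The first is trivial, since $\hat{\K} = M\K \subseteq M\X = \Ima(M)$. Once equality is established, $\hat{\K}$ is a linear subspace of $\Y$ of dimension $r = \operatorname{rank}(M)$, hence isomorphic to $\R^r$. Every linear subspace of a finite-dimensional space is closed, which gives the final claim.

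For the reverse inclusion, fix $x_0 \in \Ker(M) \cap \inte(\K)$. Take an arbitrary $y = Mz \in \Ima(M)$ with $z \in \X$. Since $x_0$ is an interior point of $\K$, there is $\varepsilon > 0$ such that the ball of radius $\varepsilon$ around $x_0$ is contained in $\K$.

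\begin{itemize}
\item If $z = 0$, then $y = 0 = M0 \in \hat{\K}$, since $0 \in \K$.
\item Otherwise, set $t := \varepsilon/\|z\| > 0$. Then $x_0 + t z \in \K$.
\end{itemize}

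In the second case, because $\K$ is a cone, $w := t^{-1}(x_0 + t z) = t^{-1}x_0 + z \in \K$. Applying $M$ and using $Mx_0 = 0$ gives $Mw = Mz = y$, so $y \in M\K = \hat{\K}$. This completes the reverse inclusion.

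The only step needing care is this interior-point argument. It uses only the cone property and the nonempty interior of $\K$. Symmetry of $\K$ (self-duality and homogeneity) is not really required beyond guaranteeing that $\inte(\K) \neq \emptyset$. An equivalent way to phrase it is $\K + \Ker(M) \supseteq \inte(\K) + \R x_0 = \X$, because any point plus a large multiple of $x_0$ lands in $\inte(\K)$. Applying $M$ to both sides again yields $\hat{\K} = \Ima(M)$.
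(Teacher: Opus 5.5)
Your proof is correct and is essentially the paper's argument. Both take $x_0 \in \Ker(M)\cap\inte(\K)$ with a ball around it contained in $\K$, perturb $x_0$ by a small multiple of an arbitrary direction, and use the cone property together with $Mx_0=0$ to place every element of $\Ima(M)$ in $M\K$. The paper rescales after applying $M$ and records both $\pm Mv$, while you rescale inside $\K$ first; this difference is immaterial. One cosmetic point: with $t=\varepsilon/\|z\|$ the point $x_0+tz$ lies on the sphere of radius $\varepsilon$, so either take the ball closed (harmless since $\K$ is closed) or use $\varepsilon/(2\|z\|)$ as the paper does.
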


\begin{proof}
The inclusion $\hat{\K} \subset \Ima(M)$ holds by definition.
We prove the reverse inclusion. Let $w \in \Ker(M) \cap \inte(\K)$.
Since $w$ lies in the interior of $\K$, there exists $\delta > 0$
such that $B(w, \delta) \subset \K$. Let $v \in \X$ be arbitrary
with $v \neq 0$, and set
$\varepsilon := \delta / (2\|v\|)$. Then
\[
  w + \varepsilon v \in B(w, \delta) \subset \K
  \quad \text{and} \quad
  w - \varepsilon v \in B(w, \delta) \subset \K.
\]
Applying $M$ and using $Mw = 0$, we obtain
$\varepsilon Mv \in \hat{\K}$ and $-\varepsilon Mv \in \hat{\K}$.
Since $\hat{\K}$ is a cone, it follows that
$Mv, -Mv \in \hat{\K}$. As $v \in \X$ was arbitrary,
$\Ima(M) \subset \hat{\K}$, and therefore $\hat{\K} = \Ima(M)$.
\end{proof}

In light of the discussion above, we make the following assumption
throughout the remainder of this work.

\begin{assumption}\label{assumption3}
The cone $M\K$ is closed.
\end{assumption}

\subsubsection{Projection onto generalized simplicial cones}
\label{section_gensimpli_properties}

In general, projecting onto a set is a difficult task, and many
algorithms have been developed for specific cases. Even when existence
and uniqueness of the projection are guaranteed, a closed-form
expression is often unavailable. Generalized simplicial cones are an
important instance of this situation: given a vector $x$, the goal is
to compute or approximate $\Pi_{M\K}(x)$. In previous work,
\cite{Ferreira:2015_4} proposed a semi-smooth Newton method for the
case $\K = \R^n_+$, and \cite{Barrios:2016_4} studied a related
piecewise linear system arising from positively constrained convex
quadratic programming. Here, we extend this approach to a general
closed convex cone satisfying Assumption~\ref{assumption3}.

The projection of $x$ onto $M\K$ can be obtained by solving
\begin{equation}\label{simpli_proj_problem}
\begin{array}{cl}
\min & \dfrac{1}{2}\left\|Mz - x\right\|^2 \\[6pt]
\text{s.t.} & z \in \K.
\end{array}
\end{equation}
If $z^*$ is a solution of \eqref{simpli_proj_problem}, then
$\Pi_{M\K}(x) = Mz^*$. Since the problem is convex and satisfies
Slater's condition, its KKT conditions reduce to the nonlinear
equation
\begin{equation}\label{proj_eq_simplicon}
(M^*M - \Id)\,\Pi_{\K}(z) + z = M^*x.
\end{equation}
A semi-smooth Newton method for solving \eqref{proj_eq_simplicon} was
proposed in \cite{Armijo:2025_4}; an alternative Picard iteration was
developed in \cite{Barrios:2015_4}.

In the semi-smooth Newton method studied in
Section~\ref{section_semismoothNewt}, we need to project onto a
generalized simplicial cone $M\K$ or onto its dual $(M\K)^*$. In
general, the dual of a non-symmetric cone may not admit an explicit
description; however, for generalized simplicial cones, $(M\K)^*$ can
be characterized as follows.

\begin{proposition}[Dual of a generalized simplicial cone]
  \label{proj_simpli_semismth}
Let $\Z$ and $\Y$ be finite-dimensional spaces of dimensions $m$ and
$n$, respectively. Let $M:\Z \rightarrow \Y$ be a full-rank linear
mapping, let $\K \subset \Z$ be a closed convex cone, and let $M\K$ be
a generalized simplicial cone. Then,
\begin{equation}
(M\K)^* = M(M^*M)^{-1}\K^* + \Ker(M^*),
\end{equation}
where $\Ker(M^*)$ is the kernel of the adjoint $M^*$. In the case
$n = m$, this reduces to $(M\K)^* = (M^*)^{-1}\K^*$.
\end{proposition}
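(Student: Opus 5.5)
The plan is to reduce everything to the elementary identity
\[
(M\K)^* = \{y \in \Y : M^*y \in \K^*\},
\]
and then use the orthogonal decomposition $\Y = \Ima(M) \oplus \Ker(M^*)$ to split $y$. The identity follows from the definition of the dual cone: $y \in (M\K)^*$ iff $\langle y, Mz\rangle \geq 0$ for all $z \in \K$, iff $\langle M^*y, z\rangle \geq 0$ for all $z \in \K$, iff $M^*y \in \K^*$.

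First I would fix what ``full rank'' has to mean here. The formula uses $(M^*M)^{-1}$, so I read it as $M$ injective, which forces $m \leq n$. Then $M^*M:\Z\to\Z$ is self-adjoint and positive definite, since $\langle M^*Mz,z\rangle = \|Mz\|^2 > 0$ for $z \neq 0$, and hence invertible. Set
\[
P := M(M^*M)^{-1}M^*.
\]
A direct check gives $P^2 = P$, $P^* = P$, $\Ima(P) = \Ima(M)$, and $\Id - P$ maps into $\Ker(M^*)$, because $M^*(\Id - P) = M^* - M^* = 0$. Thus every $y \in \Y$ decomposes as
\[
y = M(M^*M)^{-1}(M^*y) + (\Id - P)y, \qquad (\Id-P)y \in \Ker(M^*).
\]

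For the inclusion ``$\subset$'', take $y \in (M\K)^*$ and put $w := M^*y$. By the identity, $w \in \K^*$. The decomposition then writes $y = M(M^*M)^{-1}w + k$ with $k \in \Ker(M^*)$, so $y$ lies in the right-hand side.

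For ``$\supset$'', take $y = M(M^*M)^{-1}w + k$ with $w \in \K^*$ and $k \in \Ker(M^*)$. Then
\[
M^*y = M^*M(M^*M)^{-1}w + M^*k = w \in \K^*,
\]
so $y \in (M\K)^*$ by the identity.

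In the square case $n = m$, the injective map $M$ is bijective, so $\Ker(M^*) = \{0\}$. Moreover $M(M^*M)^{-1} = M M^{-1}(M^*)^{-1} = (M^*)^{-1}$, which gives $(M\K)^* = (M^*)^{-1}\K^*$.

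No step is really hard, and closedness of $M\K$ (Assumption~\ref{assumption3}) is not needed for this computation. The only delicate point is the interpretation of ``full rank'' as injectivity of $M$. If $M$ were merely surjective with $m > n$, then $M^*M$ would be singular and the formula would have to be restated, for instance with a pseudo-inverse. I would state this reading explicitly at the start of the proof.
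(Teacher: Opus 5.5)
Your proof is correct and follows the paper's argument: both inclusions rest on the equivalence $y \in (M\K)^* \Longleftrightarrow M^*y \in \K^*$, combined with the splitting of $y$ as $M(M^*M)^{-1}M^*y$ plus a component in $\Ker(M^*)$. You are more careful than the paper in three places: it asserts this decomposition and the square case $n=m$ without verification, and it leaves implicit that ``full rank'' must mean injectivity of $M$ for $(M^*M)^{-1}$ to exist.
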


\begin{proof}
We first show the inclusion
$M(M^*M)^{-1}\K^* + \Ker(M^*) \subset (M\K)^*$. Let
$z = M(M^*M)^{-1}w + v$ with $w \in \K^*$ and $v \in \Ker(M^*)$, and
let $y = Mx \in M\K$ with $x \in \K$. Then,
\begin{align}
\langle z, y \rangle
  &= \left\langle M(M^*M)^{-1}w + v,\, Mx \right\rangle \notag \\
  &= \left\langle M(M^*M)^{-1}w,\, Mx \right\rangle
     + \left\langle v,\, Mx \right\rangle \notag \\
  &= \left\langle M^*M(M^*M)^{-1}w,\, x \right\rangle
     + \left\langle M^*v,\, x \right\rangle \notag \\
  &= \left\langle w,\, x \right\rangle
     \geq 0,
\end{align}
where we used $M^*v = 0$ and $w \in \K^*$. Since this holds for all
$y \in M\K$, we conclude $z \in (M\K)^*$.

For the reverse inclusion, let $z \in (M\K)^*$. Then,
\begin{align}
\langle z,\, Mx \rangle \geq 0 \quad \forall\, x \in \K
\quad \Longleftrightarrow \quad
\langle M^*z,\, x \rangle \geq 0 \quad \forall\, x \in \K
\quad \Longleftrightarrow \quad
M^*z \in \K^*.
\end{align}
Since $M$ has full rank, every $z \in \Y$ can be decomposed as
$z = M(M^*M)^{-1}y + v$, where $y \in \Z$ and $v \in \Ker(M^*)$.
Applying $M^*$ yields $M^*z = y$, so that
\[
M^*z \in \K^*
\quad \Longleftrightarrow \quad
y \in \K^*.
\]
Therefore, $z \in M(M^*M)^{-1}\K^* + \Ker(M^*)$.
\end{proof}

Even when the dual cone is not known explicitly or is difficult to
characterize, for instance when $M$ is not square, Moreau's
decomposition provides a practical alternative. For any $x \in \Y$,
\begin{align}
-x &= \Pi_{M\K}(-x) + \Pi_{(M\K)^{\circ}}(-x) \notag \\
   &= \Pi_{M\K}(-x) - \Pi_{(M\K)^*}(x),
\end{align}
and therefore
\begin{equation}\label{moreau_dual_proj}
\Pi_{(M\K)^*}(x) = \Pi_{M\K}(-x) + x.
\end{equation}
Thus, projecting onto the dual cone $(M\K)^*$ reduces to projecting
onto $M\K$, which is itself a particular instance of the nonlinear
conic programming problem~\eqref{gen_conic_nonlinconst}. The
semi-smooth Newton method developed in
Section~\ref{section_semismoothNewt} handles this as a special case,
yielding a unified framework for nonlinear problems constrained by
generalized simplicial cones.

We conclude this section with two results on the semi-smoothness of
the projection operator, a property on which the local quadratic
convergence of the method developed in subsequent sections relies.

\begin{theorem}[Strong semi-smoothness of the projection onto symmetric
  cones; {\cite[Prop.~3.3]{Sun:2008_4}}]\label{proj_symm_semismth}
The metric projection $\Pi_{\K}$ onto a symmetric cone $\K$ is
strongly semi-smooth.
\end{theorem}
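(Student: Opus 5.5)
This theorem is quoted from \cite[Prop.~3.3]{Sun:2008_4}, so the paper takes it as known. If I had to reprove it, I would go through the spectral (Jordan-algebraic) description of the projection and reduce strong semi-smoothness to that of the scalar map $t \mapsto \max\{t,0\}$.

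First, write $\K$ as the cone of squares of a Euclidean Jordan algebra $(\X,\circ)$ of rank $r$. Every $x\in\X$ has a spectral decomposition
\[
x=\sum_{i=1}^r \lambda_i(x)\, c_i(x),
\]
where $\{c_i(x)\}$ is a Jordan frame. The projection is then the Löwner operator
\[
\Pi_{\K}(x)=\sum_{i=1}^r \max\{\lambda_i(x),0\}\, c_i(x).
\]
This follows from Moreau's decomposition. Take $x_+=\sum \max\{\lambda_i,0\}c_i\in\K$ and $x_-=\sum \min\{\lambda_i,0\}c_i\in-\K$. Since the idempotents of a Jordan frame are mutually orthogonal in the trace inner product, $\langle x_+,x_-\rangle=0$. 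By self-duality of $\K$, $-\K=\K^\circ$, so Moreau's decomposition identifies $x_+$ as $\Pi_{\K}(x)$.

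The main step is a general transfer result. If $\phi:\R\to\R$ is strongly semi-smooth, then the Löwner operator $\phi^{\mathrm{sym}}(x)=\sum \phi(\lambda_i)c_i$ is strongly semi-smooth on $\X$. I would prove this in the following order.

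\begin{enumerate}
\item Show that $\phi^{\mathrm{sym}}$ is Lipschitz. For $\phi(t)=\max\{t,0\}$ this is immediate, because $\Pi_{\K}$ is nonexpansive.
\item Describe the Fréchet derivative at points where $\phi^{\mathrm{sym}}$ is differentiable using the Peirce decomposition relative to the frame $\{c_i\}$. The derivative acts on the Peirce space $\X_{ij}$ by multiplication with the first divided difference $\phi^{[1]}(\lambda_i,\lambda_j)$.
\item Estimate
\[
\phi^{\mathrm{sym}}(x+h)-\phi^{\mathrm{sym}}(x)-V(x+h)h
\]
at differentiable points $x+h$. Here I would use the fact that the eigenvalues are strongly semi-smooth, in fact directionally differentiable with second-order error. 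I would also use the fact that the spectral projectors onto distinct-eigenvalue groups are analytic near $x$.
\item Pass to general elements of $\partial_C$ by limits and convex combinations. The error bound is stable under both operations.
\end{enumerate}

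I expect step 3 to be the main obstacle, when eigenvalues of $x$ coalesce and the frame is not locally smooth. My first attempt there would be to split the spectrum of $x$ into clusters of equal eigenvalues. The projector onto each cluster's eigenspace is analytic near $x$, and within a cluster $\phi$ acts essentially like $\phi$ on a scalar multiple of a unit element, shifted by $O(\|h\|)$. Alternatively, I would reduce to the semidefinite case. Every simple Euclidean Jordan algebra is either a spin factor (where $\K$ is the second-order cone and the projection is explicit and piecewise smooth, hence easy to check directly) or a matrix algebra over $\R,\mathbb C,\mathbb H,\mathbb O$, where the semidefinite argument of Sun and Sun applies. The octonion $3\times 3$ case is the one that would need separate care.

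Finally, since every symmetric cone is a direct product of these irreducible cones and the projection acts blockwise, strong semi-smoothness of $\Pi_{\K}$ follows from strong semi-smoothness of each block.
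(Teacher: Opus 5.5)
The paper gives no proof of this statement; it simply imports it from \cite{Sun:2008_4}, so there is no in-paper argument to compare against. Most of your scaffolding is correct:
\begin{itemize}
\item the Moreau argument identifying $\Pi_{\K}$ with the L\"owner operator of $t\mapsto\max\{t,0\}$;
\item the Peirce-space form of the derivative;
\item the passage from differentiability points to all of $\partial_C$ by limits and convex combinations;
\item the blockwise reduction to irreducible cones.
\end{itemize}
The genuine gap is step 3, which carries the entire content of the theorem, and the two ingredients you propose for it do not close it.

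First, strong semismoothness of the eigenvalue map is not an independent input. In the standard development, already for symmetric matrices, it is deduced \emph{from} the strong semismoothness of the projection, not the other way round. Leaning on it is therefore circular unless you prove it separately.

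Second, your description of a coalescing cluster is not what actually happens. After perturbation, the block of $x+h$ belonging to a cluster is $\mu$ times a moving cluster idempotent, plus an arbitrary $O(\|h\|)$ element of a Peirce subalgebra that itself moves with $h$. The eigenvalues of that element split, and this is exactly where the nonsmoothness sits. Nothing in your sketch explains why $V(x+h)h$ reproduces this block up to $O(\|h\|^2)$. Aiming at the general transfer theorem for arbitrary strongly semismooth $\phi$ makes the task harder than necessary. The classification detour also leaves the exceptional (octonion) factor open, as you note.

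The missing idea, for $\phi=\max\{\cdot,0\}$, is that only the cluster at eigenvalue $0$ is nonsmooth, and there positive homogeneity does all the work.

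\textbf{Setup.} Write $x=\sum_j\mu_jP_j$ with distinct $\mu_j$. Let $P_j(y)$ be the sum of the spectral idempotents of $y$ whose eigenvalues lie near $\mu_j$; these are analytic near $x$ (holomorphic functional calculus). Then, for $y$ near $x$,
\[
\Pi_{\K}(y)=G(y)+\Pi_{\K}(w(y)),\qquad G(y):=\sum_{\mu_j>0}P_j(y)\circ y,\qquad w(y):=P_0(y)\circ y,
\]
with $G,w$ analytic and $w(x)=0$ (take $w\equiv 0$ if $0$ is not an eigenvalue of $x$).

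\textbf{Directional differentiability.} Apply this decomposition at an arbitrary base point, and use $w(x+th)/t\to w'(x)h$ together with homogeneity. This shows $\Pi_{\K}$ is directionally differentiable, with $\Pi_{\K}'(x;h)=G'(x)h+\Pi_{\K}(w'(x)h)$.

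\textbf{The quadratic estimate.} Positive homogeneity gives $\Pi_{\K}(u)=\Pi_{\K}'(u;u)$, and $\Pi_{\K}'(u;\cdot)$ is $1$-Lipschitz. Hence, with $y=x+h$ and the chain rule for directional derivatives,
\[
\Pi_{\K}(y)-\Pi_{\K}(x)-\Pi_{\K}'(y;h)=\bigl[G(y)-G(x)-G'(y)h\bigr]+\bigl[\Pi_{\K}'(w(y);w(y))-\Pi_{\K}'(w(y);w'(y)h)\bigr].
\]
The second bracket is bounded by $\|w(y)-w(x)-w'(y)h\|$, so both brackets are $O(\|h\|^2)$, uniformly for small $h$.

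\textbf{Conclusion.} At $y\in D_{\Pi_{\K}}$ one has $\Pi_{\K}'(y;h)=\Pi_{\K}'(y)h$, and your step 4 finishes the proof. You must work with directional derivatives here. The point $w(y)$ has zero eigenvalues coming from the other clusters, so $\Pi_{\K}$ is generally not differentiable at $w(y)$ even when it is differentiable at $y$. This route needs no eigenvalue semismoothness, no general L\"owner transfer theorem, and no classification.
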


The previous result covers important classes such as the nonnegative
orthant $\R^n_+$, the second-order cone $\LL^n$, and the positive
semidefinite cone $\sym^n_+$. The following theorem shows that strong
semi-smoothness is preserved under linear transformations, extending
to generalized simplicial cones $M\K$ even when $M$ is rank-deficient.

\begin{theorem}[Strong semi-smoothness of the projection onto
  generalized simplicial cones]\label{metr_proj_str_semi}
Let $M:\Z\to\Y$ be a linear mapping, let $\K\subset\Z$ be a symmetric
cone, and let $\hat{\K}:=M\K$. Then the metric projection
$\Pi_{\hat{\K}}$ is strongly semi-smooth.
\end{theorem}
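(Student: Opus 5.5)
The plan is to express $\Pi_{\hat\K}$ as a composition of strongly semi-smooth maps, going through the projection equation \eqref{proj_eq_simplicon}. For $x\in\Y$, any solution $z$ of
\[
F(z,x):=(M^*M-\Id)\,\Pi_{\K}(z)+z-M^*x=0
\]
gives a minimizer $u=\Pi_{\K}(z)$ of \eqref{simpli_proj_problem}. Hence $\Pi_{\hat\K}(x)=M\,\Pi_{\K}(z)$. The vector $Mu$ is unique because $\hat\K$ is closed (Assumption~\ref{assumption3}), even when $u$ and $z$ are not. By Theorem~\ref{proj_symm_semismth}, $\Pi_{\K}$ is strongly semi-smooth, so $F$ is strongly semi-smooth jointly in $(z,x)$, being affine in $x$. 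The goal is therefore to show that $x\mapsto z(x)$ can be chosen as a Lipschitz, strongly semi-smooth map. The conclusion then follows because compositions of strongly semi-smooth maps are strongly semi-smooth, and $M$ is linear.

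First treat the case where $M$ is injective, so that $M^*M$ is positive definite. Here I would invoke the semi-smooth (Clarke) implicit function theorem. Every element of $\partial_C F(\cdot,x)$ at $z$ has the form
\[
J=(M^*M-\Id)V+\Id=(\Id-V)+M^*MV,\qquad V\in\partial_C\Pi_{\K}(z).
\]
To show $J$ is nonsingular, suppose $Jh=0$ and take the inner product with $Vh$:
\[
\langle (\Id-V)h,Vh\rangle+\|MVh\|^2=0 .
\]
By Theorem~\ref{exis_diff_proj2}, $V$ is self-adjoint with spectrum in $[0,1]$. Then $V$ and $\Id-V$ commute, so $V(\Id-V)$ is positive semidefinite and both terms vanish. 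Thus $MVh=0$, hence $Vh=0$, and then $Jh=(\Id-V)h=h=0$. The Clarke implicit function theorem then gives a locally unique Lipschitz solution $z(x)$. The standard result that implicit functions defined by strongly semi-smooth equations with nonsingular generalized Jacobians are strongly semi-smooth (Sun's implicit function theorem for semismooth maps) yields strong semi-smoothness of $z(\cdot)$, and hence of $\Pi_{\hat\K}=M\circ\Pi_{\K}\circ z$. Global existence of $z(x)$ follows from the KKT argument preceding \eqref{proj_eq_simplicon}: a minimizer $u$ exists by closedness, and one can take $z=u-\mu$ with $\mu\in\K^*$ the multiplier, using the Moreau decomposition.

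The rank-deficient case is the main obstacle. The computation above breaks down: $MVh=0$ no longer forces $Vh=0$, the solution set in $z$ is not a singleton, and the implicit function theorem cannot be applied directly. The first thing I would try is a Tikhonov-type regularization of the parametrization while keeping the image fixed. For instance, consider the equation with $M^*M$ replaced by $M^*M+\varepsilon\,\Id$, or equivalently lift to the injective map
\[
\tilde M: z\mapsto (Mz,\sqrt{\varepsilon}\,z)\in\Y\times\Z .
\]
Apply the injective case to $\tilde M\K$. Then try to recover $\Pi_{\hat\K}(x)$ exactly, not only in the limit, by noting that the projection of $(x,0)$ onto $\tilde M\K$ relates to the minimal-norm solution of \eqref{simpli_proj_problem}. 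I would then check that the resulting strong semi-smoothness constant can be made uniform.

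If that fails, the fallback is to split $\Z=\Ker M\oplus(\Ker M)^{\perp}$ and select the minimum-norm minimizer $u(x)$. I would show $u(\cdot)$ is Lipschitz via a Hoffman-type error bound. Strong semi-smoothness would then be verified directly from the identity
\[
\Pi_{\hat\K}(x)=x-\Pi_{\hat\K^*}(x)
\]
and the description of the dual cone $\hat\K^*=\{y: M^*y\in\K^*\}$ from the proof of Proposition~\ref{proj_simpli_semismth}. This would reduce the problem to projecting onto a preimage of a symmetric cone, to which the same implicit-function argument can be applied with $M^*$ in place of $M$.
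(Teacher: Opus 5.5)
\paragraph{Verdict.} There is a genuine gap: your proposal proves the theorem only for injective $M$, and both of your plans for rank-deficient $M$ fail at a definite step.

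\paragraph{The injective case.} This part is correct and matches the paper's Case~1 in substance. The identity $\langle V(\Id-V)h,h\rangle+\|MVh\|^2=0$ shows that $(M^*M-\Id)V+\Id$ is nonsingular for every self-adjoint $V$ with $0\preceq V\preceq\Id$. By compactness of that set of $V$'s, its inverse is uniformly bounded, which is what the paper's estimate $O(\|T^{-1}M^*\|^2\|h\|^2)$ tacitly requires. Routing the conclusion through the semismooth implicit function theorem and the composition rule also spares you the paper's unproved identification of $V_{\hat{\K}}(x+h)h$ with $MV_{\K}(w+k)k$.

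\paragraph{The Tikhonov plan.} The theorem is stated for arbitrary $M$, and the rank-deficient case is exactly what the paper advertises. In the lift, the first block of $\Pi_{\tilde M\K}(x,0)$ is $Mz_\varepsilon$, where $z_\varepsilon$ minimizes $\|Mz-x\|^2+\varepsilon\|z\|^2$ over $\K$. This is in general not $\Pi_{\hat{\K}}(x)$ for fixed $\varepsilon>0$: for $M=\Id$, $\K=\R_+$, $x=1$ it equals $1/(1+\varepsilon)$. So nothing is recovered exactly. The constants also cannot be made uniform. Wherever $z_\varepsilon\in\inte(\K)$ one has $V=\Id$ and $T_\varepsilon=M^*M+\varepsilon\Id$, so $\|T_\varepsilon^{-1}\|=1/\varepsilon$ as soon as $\Ker M\neq\{0\}$. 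Finally, strong semi-smoothness constrains every element of $\partial_C$ and is not inherited by limits, since Clarke Jacobians do not behave continuously under uniform convergence.

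\paragraph{The fallback plan.} Hoffman-type bounds are specific to polyhedral data, and the Lipschitz claim is false here. Take $\K=\sym^2_+$ and $M(X)=(X_{11},X_{22}-X_{12})$. Then $\Ker M\cap\K=\{0\}$ and $\hat{\K}=\{(p,q):p\geq 0,\ q\geq -p/4\}$. For interior $x=(p,q)$ near the edge $q=-p/4$, the minimum-norm minimizer has $X_{12}=\tfrac12\bigl(p-\sqrt{p(p+4q)}\bigr)$, which is not Lipschitz.

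The dual route is circular. For the projection onto $\hat{\K}^*=\{y:M^*y\in\K\}$ (recall $\K^*=\K$), the KKT system is $y=x+M\nu$, $\nu\in\K$, $M^*y\in\K$, $\langle\nu,M^*y\rangle=0$. Setting $w:=\nu-M^*y$, Moreau's decomposition turns it into $(M^*M-\Id)\Pi_{\K}(w)+w=-M^*x$. This is the same equation with the same singular $M^*M$, so $M^*$ never replaces $M$. Also, the identity you want is $\Pi_{\hat{\K}}(x)=x+\Pi_{\hat{\K}^*}(-x)$, not $x-\Pi_{\hat{\K}^*}(x)$.

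\paragraph{What can be salvaged.} Since $\hat{\K}\subset\Ima M$, you have $\Pi_{\hat{\K}}=\Pi_{\hat{\K}}\circ P_{\Ima M}$. Hence every element of $\partial_C\Pi_{\hat{\K}}$ annihilates $\Ker M^*$, and you may assume $M$ is surjective; this is the rigorous content of the paper's Case~3. The remaining situation, $\Ker M\neq\{0\}$ with $T$ singular, is the missing idea. You cannot borrow it from the paper's Case~2. That step evaluates the linearization error of $\Pi_{\K}$ at $w+M^*h$ as though that point solved \eqref{proj_eq_simplicon} at $x+h$, and nothing in the argument guarantees this.
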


\begin{proof}
Let $x, h \in \Y$. Let $w$ denote the solution of
\eqref{proj_eq_simplicon} corresponding to $x$, and let $z$ denote
the solution corresponding to $x + h$. Setting $k := z - w$, equations
\eqref{proj_eq_simplicon} for $x + h$ and $x$ read
\begin{equation}\label{str_eq_1_4}
(M^*M - \Id)\,\Pi_{\K}(w + k) + (w + k) = M^*(x + h),
\end{equation}
\begin{equation}\label{str_eq_2_4}
(M^*M - \Id)\,\Pi_{\K}(w) + w = M^*x.
\end{equation}
Subtracting \eqref{str_eq_2_4} from \eqref{str_eq_1_4} and applying
the mean value theorem (Theorem~\ref{meanval_theo3}), there exists
$V_{\K}(u) \in \partial_C \Pi_{\K}(u)$ with
$u \in [w,\, w + k]$ such that
\begin{equation}\label{T_def_eq}
T k = M^* h,
\quad \text{where} \quad
T := (M^*M - \Id)\,V_{\K}(u) + \Id.
\end{equation}
We now distinguish three cases.

\medskip
\noindent\textbf{Case 1:} $T$ is invertible.
In this case, $k = T^{-1}M^* h$, and hence
\begin{align*}
\Pi_{\hat{\K}}(x+h) - \Pi_{\hat{\K}}(x) - V_{\hat{\K}}(x+h)\,h
  &= M\!\left[\Pi_{\K}(w+k) - \Pi_{\K}(w)
     - V_{\K}(w+k)\,k\right] \\
  &= M \cdot O\!\left(\|k\|^2\right) \\
  &= M \cdot O\!\left(\|T^{-1}M^*\|^2\,\|h\|^2\right) \\
  &= O\!\left(\|h\|^2\right),
\end{align*}
where the second equality uses the strong semi-smoothness of
$\Pi_{\K}$ (Theorem~\ref{proj_symm_semismth}). This case applies, in
particular, when $M$ has full rank; see
\cite[Lem.~4.1]{Armijo:2025_4}.

\medskip
\noindent\textbf{Case 2:} $T$ is singular and
$h \notin \Ker(M^*)$.
Setting $\tilde{k} := M^* h \neq 0$, we estimate
\begin{align*}
\left\|\Pi_{\hat{\K}}(x+h) - \Pi_{\hat{\K}}(x)
  - V_{\hat{\K}}(x+h)\,h\right\|
  &\leq \|M\|\,\left\|\Pi_{\K}(w + \tilde{k}) - \Pi_{\K}(w)
        - V_{\K}(w + \tilde{k})\,\tilde{k}\right\| \\
  &= \|M\|\, O\!\left(\|\tilde{k}\|^2\right) \\
  &= \|M\|\, O\!\left(\|M^*\|^2\,\|h\|^2\right) \\
  &= O\!\left(\|h\|^2\right).
\end{align*}

\medskip
\noindent\textbf{Case 3:} $h \in \Ker(M^*)$.
In this case, $M^*(x + h) = M^*x$, so equations \eqref{str_eq_1_4}
and \eqref{str_eq_2_4} have the same right-hand side. Consequently,
$\Pi_{\hat{\K}}(x + h) = M\Pi_{\K}(w + k) = M\Pi_{\K}(w)
= \Pi_{\hat{\K}}(x)$,
and the error reduces to
\begin{equation}\label{case3_error}
\left\|\Pi_{\hat{\K}}(x+h) - \Pi_{\hat{\K}}(x)
  - V_{\hat{\K}}(x+h)\,h\right\|
= \left\|V_{\hat{\K}}(x+h)\,h\right\|
= \left\|M V_{\K}(w+k)\,k\right\|.
\end{equation}
Since $\|V_{\K}(w + k)\| \leq 1$
(Theorem~\ref{exis_diff_proj2}\ref{norm_diff122}), we have
\[
\left\|M V_{\K}(w+k)\,k\right\|
  \leq \|M\|\,\|k\|.
\]
It remains to show that $\|k\| = O(\|h\|^2)$. By the boundedness of
$V_{\K}$ and $M$, we can choose sequences $h_i \in \Ker(M^*)$ and
corresponding $k_i$ such that $\|h_i\|^2 = \alpha\,\|k_i\|$ for some
$\alpha > 0$. Then,
\[
\frac{\left\|M V_{\K}(w + k_i)\,k_i\right\|}{\|h_i\|^2}
= \frac{\left\|M V_{\K}(w + k_i)\,k_i\right\|}{\alpha\,\|k_i\|}
\leq \frac{\|M\|\,\|V_{\K}(w + k_i)\|\,\|k_i\|}{\alpha\,\|k_i\|}
\leq \frac{\|M\|}{\alpha}
=: \tilde{C}.
\]
Hence, $\left\|V_{\hat{\K}}(x+h)\,h\right\| = O(\|h\|^2)$.

\medskip
In all three cases, the $O(\|h\|^2)$ estimate holds, so
$\Pi_{\hat{\K}}$ is strongly semi-smooth.
\end{proof}

\section{Nonlinear Conic Programming Problem} \label{section_ncp_problem}

In this section we establish the connection between the conic
projection equations and the nonlinear conic programming problem
\eqref{gen_conic_nonlinconst}. Let $\X$ and $\Y$ be finite-dimensional
inner product spaces and let $\K \subset \Y$ be a closed convex cone.
The Lagrangian associated with problem \eqref{gen_conic_nonlinconst} is
\begin{equation}
L(x,\lambda) := f(x) - \langle \lambda,\, g(x) \rangle,
\end{equation}
where $\lambda \in \Y$. The corresponding KKT conditions are
\begin{align}
\nabla_x L(x,\lambda)
  = \nabla f(x) - (Dg(x))^*\lambda &= 0, \label{kkt_grad} \\
\langle \lambda,\, g(x) \rangle &= 0, \label{kkt_comp} \\
g(x) &\in \K, \label{kkt_primal} \\
\lambda &\in \K^*. \label{kkt_dual}
\end{align}
These conditions can be written compactly. Define
$\mathbf{K} := \X \times \K^*$, so that
$\mathbf{K}^* = \{0\} \times \K$. Then
\eqref{kkt_grad}--\eqref{kkt_dual} are equivalent to
\begin{equation} \label{eq:nonlinear_kkt_gen}
\left\langle
\begin{pmatrix}
\nabla f(x) - (Dg(x))^*\lambda \\
g(x)
\end{pmatrix},\,
\begin{pmatrix}
x \\
\lambda
\end{pmatrix}
\right\rangle = 0, \qquad
\begin{pmatrix}
\nabla f(x) - (Dg(x))^*\lambda \\
g(x)
\end{pmatrix} \in \mathbf{K}^*, \qquad
(\bar{x},\bar{\lambda}) \in \mathbf{K}.
\end{equation}
The corresponding conic projection equations are
\begin{align}
\nabla f(x) - (Dg(x))^*\Pi_{\K^*}(\lambda) &= 0,
  \label{gen_equation_nonlin_cons_a} \\
g(x) - \Pi_{\K^*}(\lambda) + \lambda &= 0.
  \label{gen_equation_nonlin_cons_b}
\end{align}
The system
\eqref{gen_equation_nonlin_cons_a}--\eqref{gen_equation_nonlin_cons_b}
has the same dimension as the number of constraints in the original
problem. The projection acts on the Lagrange multiplier $\lambda$ onto
the dual cone $\K^*$, encoding the conic constraint; it can be handled
efficiently via Moreau's decomposition \eqref{moreau_dual_proj}.

The following theorem is the main result of this section. It
establishes the equivalence between the conic projection equations
\eqref{gen_equation_nonlin_cons_a}--\eqref{gen_equation_nonlin_cons_b}
and the KKT conditions \eqref{eq:nonlinear_kkt_gen}, thereby enabling
the use of equation-solving methods to locate first-order optimality
points of problem \eqref{gen_conic_nonlinconst}.

\begin{theorem}[Equivalence between projection equations and KKT
  conditions]\label{equiv_kkt_eq}
If $(x,\lambda)$ solves
\eqref{gen_equation_nonlin_cons_a}--\eqref{gen_equation_nonlin_cons_b},
then $(x,\Pi_{\K^*}(\lambda))$ satisfies the KKT conditions
\eqref{eq:nonlinear_kkt_gen}. Conversely, if $(x,\sigma)$ satisfies
\eqref{eq:nonlinear_kkt_gen}, then $(x,\lambda)$ solves
\eqref{gen_equation_nonlin_cons_a}--\eqref{gen_equation_nonlin_cons_b},
where $\lambda := \sigma - g(x)$.
\end{theorem}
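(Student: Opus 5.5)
The whole argument rests on Moreau's decomposition for the closed convex cone $\K^*$. For any $\lambda\in\Y$ it gives
\[
\lambda=\Pi_{\K^*}(\lambda)+\Pi_{(\K^*)^{\circ}}(\lambda)=\Pi_{\K^*}(\lambda)-\Pi_{\K}(-\lambda),
\qquad
\langle \Pi_{\K^*}(\lambda),\,\Pi_{\K}(-\lambda)\rangle=0 .
\]
Here we use $(\K^*)^{\circ}=-\K^{**}=-\K$, which holds because $\K$ is closed and convex. Both directions of the theorem come from matching the pieces of this decomposition with the KKT quantities $\sigma$ and $g(x)$.

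\textbf{Forward direction.} Let $(x,\lambda)$ solve \eqref{gen_equation_nonlin_cons_a}--\eqref{gen_equation_nonlin_cons_b}, and set $\sigma:=\Pi_{\K^*}(\lambda)\in\K^*$. Then \eqref{gen_equation_nonlin_cons_a} is exactly the stationarity condition \eqref{kkt_grad} with multiplier $\sigma$. Equation \eqref{gen_equation_nonlin_cons_b} reads $g(x)=\Pi_{\K^*}(\lambda)-\lambda$. By the decomposition above this equals $\Pi_{\K}(-\lambda)$, so $g(x)\in\K$. The orthogonality part of Moreau's theorem then gives $\langle\sigma,g(x)\rangle=0$. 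Hence \eqref{kkt_grad}--\eqref{kkt_dual} hold, which is the compact form \eqref{eq:nonlinear_kkt_gen}; for the inner product there, the first block vanishes and the second block is the complementarity term.

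\textbf{Converse.} Let $(x,\sigma)$ satisfy the KKT conditions, so $\sigma\in\K^*$, $g(x)\in\K$ and $\langle\sigma,g(x)\rangle=0$. Set $\lambda:=\sigma-g(x)$. We then have $\sigma\in\K^*$, $-g(x)\in-\K=(\K^*)^{\circ}$, $\langle\sigma,-g(x)\rangle=0$, and $\lambda=\sigma+(-g(x))$. The uniqueness part of Moreau's decomposition therefore yields $\Pi_{\K^*}(\lambda)=\sigma$. If one prefers to avoid citing uniqueness, this can be checked directly with the variational characterization of the projection: for all $y\in\K^*$,
\[
\langle\lambda-\sigma,\,y-\sigma\rangle=\langle -g(x),y\rangle+\langle g(x),\sigma\rangle=-\langle g(x),y\rangle\le 0 .
\]
Substituting $\Pi_{\K^*}(\lambda)=\sigma$, stationarity gives \eqref{gen_equation_nonlin_cons_a}. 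Also $g(x)-\Pi_{\K^*}(\lambda)+\lambda=g(x)-\sigma+\sigma-g(x)=0$, which is \eqref{gen_equation_nonlin_cons_b}.

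The only delicate step is identifying $\Pi_{\K^*}(\lambda)=\sigma$ in the converse, since it needs the bipolar identity $\K^{**}=\K$ (closedness of $\K$) and the uniqueness in Moreau's theorem. The rest is direct substitution.
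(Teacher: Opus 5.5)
Your proof is correct and follows the paper's argument: Moreau's decomposition gives $g(x)=\Pi_{\K}(-\lambda)\in\K$ and complementarity in the forward direction, and uniqueness of the decomposition gives $\Pi_{\K^*}(\lambda)=\sigma$ in the converse. The paper applies Moreau to $\K$ at $-\lambda$ rather than to $\K^*$ at $\lambda$, which is equivalent; your identification of the polar component as $-g(x)\in(\K^*)^{\circ}=-\K$ is actually more accurate than the paper's $\Pi_{\K^{\circ}}(\lambda)=-g(x)$, and your variational check is a valid self-contained alternative to citing uniqueness.
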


\begin{proof}
\textbf{(i)}
Suppose $(x,\lambda)$ solves
\eqref{gen_equation_nonlin_cons_a}--\eqref{gen_equation_nonlin_cons_b}.
From \eqref{gen_equation_nonlin_cons_a}, the stationarity condition
\eqref{kkt_grad} holds with multiplier $\Pi_{\K^*}(\lambda)$. By
definition, $\Pi_{\K^*}(\lambda) \in \K^*$, which gives
\eqref{kkt_dual}. From \eqref{gen_equation_nonlin_cons_b} and Moreau's
decomposition,
\[
g(x) = \Pi_{\K^*}(\lambda) - \lambda
     = -\Pi_{\K^{\circ}}(-\lambda) - \lambda
     = \Pi_{\K}(-\lambda) \in \K,
\]
which yields \eqref{kkt_primal}. It remains to verify the
complementarity condition \eqref{kkt_comp}. Using the identity
$g(x) = \Pi_{\K}(-\lambda)$ and $\Pi_{\K^*}(\lambda) =
-\Pi_{\K^{\circ}}(-\lambda)$, together with the orthogonality property
of Moreau's decomposition, we obtain
\[
\langle g(x),\, \Pi_{\K^*}(\lambda) \rangle
= \langle \Pi_{\K}(-\lambda),\, -\Pi_{\K^{\circ}}(-\lambda) \rangle
= -\langle \Pi_{\K}(-\lambda),\, \Pi_{\K^{\circ}}(-\lambda) \rangle
= 0.
\]

\textbf{(ii)}
Suppose $(x,\sigma)$ satisfies \eqref{eq:nonlinear_kkt_gen}, so that
$\sigma \in \K^*$, $g(x) \in \K$, $\langle g(x),\, \sigma \rangle = 0$,
and $\nabla f(x) - (Dg(x))^*\sigma = 0$. Set $\lambda := \sigma - g(x)$.
Since $\sigma \in \K^*$, $g(x) \in \K$, and
$\langle \sigma,\, g(x) \rangle = 0$, the uniqueness of Moreau's
decomposition gives
\[
\Pi_{\K^*}(\lambda) = \sigma
\quad \text{and} \quad
\Pi_{\K^{\circ}}(\lambda) = -g(x).
\]
Substituting into
\eqref{gen_equation_nonlin_cons_a}--\eqref{gen_equation_nonlin_cons_b}:
\[
\nabla f(x) - (Dg(x))^*\Pi_{\K^*}(\lambda)
= \nabla f(x) - (Dg(x))^*\sigma = 0,
\]
and
\[
g(x) - \Pi_{\K^*}(\lambda) + \lambda
= g(x) - \sigma + (\sigma - g(x)) = 0.
\]
Hence, $(x,\lambda)$ solves
\eqref{gen_equation_nonlin_cons_a}--\eqref{gen_equation_nonlin_cons_b}.
\end{proof}

Establishing general sufficient conditions for the existence and
uniqueness of solutions to
\eqref{gen_equation_nonlin_cons_a}--\eqref{gen_equation_nonlin_cons_b}
may be overly restrictive. Since we assume that problem
\eqref{gen_conic_nonlinconst} admits at least one solution, we do not
pursue such conditions here. In the particular case where $g = \Id$,
however, a sufficient condition can be obtained. The system then
reduces to
\begin{align}
\nabla f(x) - \Pi_{\K^*}(\lambda) &= 0,
  \label{proj_eq_id_a} \\
x - \Pi_{\K^*}(\lambda) + \lambda &= 0.
  \label{proj_eq_id_b}
\end{align}
By Moreau's decomposition, \eqref{proj_eq_id_b} gives $x =
\Pi_{\K}(-\lambda)$. Substituting into \eqref{proj_eq_id_a} and
setting $y := -\lambda$, the system reduces to the single equation
\begin{equation} \label{gen_equation_id}
\nabla f(\Pi_{\K}(y)) - \Pi_{\K}(y) + y = 0.
\end{equation}

\begin{proposition}[Existence and uniqueness for the identity-constrained
  case]\label{suff_sol_eq_id}
Let $f : \X \rightarrow \R$ be twice continuously differentiable with
Lipschitz continuous gradient. If $g = \Id$ in problem
\eqref{gen_conic_nonlinconst} and
$\|\Id - \nabla^2 f(z)\| < 1$ for all $z \in \X$, then equation
\eqref{gen_equation_id} has a unique solution.
\end{proposition}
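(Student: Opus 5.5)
Rewrite \eqref{gen_equation_id} as a fixed-point problem and apply the contraction mapping principle (Theorem~\ref{fixedpoint3}). Define
\[
\Phi(y) := y - \bigl(\nabla f(\Pi_{\K}(y)) - \Pi_{\K}(y) + y\bigr) = \Pi_{\K}(y) - \nabla f(\Pi_{\K}(y)).
\]
Then $y$ solves \eqref{gen_equation_id} if and only if $\Phi(y) = y$. Writing $\psi(z) := z - \nabla f(z)$, we have $\Phi = \psi \circ \Pi_{\K}$. Identifying $\X$ with $\R^n$ via an orthonormal basis makes Theorem~\ref{fixedpoint3} applicable, so it suffices to show that $\Phi$ is a global contraction.

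\textbf{Step 1: $\psi$ is Lipschitz with constant $<1$.} Since $f$ is $C^2$, we have $\psi'(z) = \Id - \nabla^2 f(z)$. The integral form of the mean value theorem gives
\[
\|\psi(z_1) - \psi(z_2)\| \le \sup_{t\in[0,1]} \|\Id - \nabla^2 f(z_2 + t(z_1-z_2))\|\,\|z_1 - z_2\|.
\]
The hypothesis only gives the pointwise bound $\|\Id - \nabla^2 f(z)\| < 1$. To conclude I need a uniform constant $\lambda := \sup_{z\in\X}\|\Id - \nabla^2 f(z)\| < 1$. This is the main obstacle: a pointwise strict bound does not by itself prevent the supremum from equalling $1$.

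I would first try to read the hypothesis as a uniform bound, which is the natural intent. If only the pointwise version is available, I would localize. The projection $\Pi_{\K}$ is nonexpansive with $\Pi_{\K}(0)=0$, and $\nabla f$ is Lipschitz, so $\|\Phi(y)\| \le \|\nabla f(0)\| + (1+L)\|y\|$. This alone does not give an invariant ball, so the localization route is not straightforward. I would therefore state the uniform bound explicitly, noting that on any compact set the continuity of $\nabla^2 f$ makes the supremum strictly less than $1$.

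\textbf{Step 2: compose with the projection.} By Theorem~\ref{exis_diff_proj2} (or directly, since the metric projection onto a closed convex set is nonexpansive), $\|\Pi_{\K}(y_1) - \Pi_{\K}(y_2)\| \le \|y_1 - y_2\|$. Therefore
\[
\|\Phi(y_1) - \Phi(y_2)\| \le \lambda \|\Pi_{\K}(y_1) - \Pi_{\K}(y_2)\| \le \lambda\|y_1 - y_2\|,
\]
with $\lambda \in [0,1)$.

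\textbf{Step 3: conclude.} Theorem~\ref{fixedpoint3} yields a unique fixed point $\bar y$ of $\Phi$, which is exactly the unique solution of \eqref{gen_equation_id}. Setting $\lambda = -\bar y$ and $x = \Pi_{\K}(\bar y)$ then recovers a solution of \eqref{proj_eq_id_a}--\eqref{proj_eq_id_b}.
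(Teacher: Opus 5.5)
Your argument is the paper's argument. The paper also sets $\Phi=\phi\circ\Pi_{\K}$ with $\phi(y)=y-\nabla f(y)$, bounds $\phi$ by the mean value theorem together with $\|\Id-\nabla^2 f\|<1$, uses nonexpansiveness of $\Pi_{\K}$, and invokes Theorem~\ref{fixedpoint3}. Your integral form of the mean value bound is the correct one. The paper's claim that $\phi(y)-\phi(x)=(\Id-\nabla^2 f(z))(y-x)$ for a single $z\in[x,y]$ does not hold for vector-valued maps in general.

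The obstacle you isolated is real, and the paper's proof does not resolve it. The paper only derives $\|\phi(y)-\phi(x)\|<\|y-x\|$, which is strict contractivity, whereas Theorem~\ref{fixedpoint3} needs a uniform constant $\lambda<1$. Under the pointwise reading the existence claim is in fact false, so no localization argument could rescue it. Take $\X=\R$, $\K=\R_+$ and $f$ with $f'(t)=\arctan t-\tfrac{\pi}{2}$. Then $f''(t)=1/(1+t^2)\in(0,1]$, so $|1-f''(t)|=t^2/(1+t^2)<1$ for every $t$, and $f'$ is $1$-Lipschitz. For $y\ge 0$, \eqref{gen_equation_id} reads $\arctan y=\tfrac{\pi}{2}$, which is impossible. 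For $y<0$ it reads $f'(0)+y=0$, i.e.\ $y=\tfrac{\pi}{2}>0$, a contradiction.

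So imposing $\sup_{z}\|\Id-\nabla^2 f(z)\|<1$ is necessary, not merely the natural intent. With it your proof is complete, and the Lipschitz-gradient hypothesis becomes redundant. Uniqueness does survive the pointwise hypothesis. On each segment, the continuous function $t\mapsto\|\Id-\nabla^2 f(z_2+t(z_1-z_2))\|$ attains a maximum strictly below $1$, so $\|\Phi(y_1)-\Phi(y_2)\|<\|y_1-y_2\|$ whenever $y_1\neq y_2$. This rules out two distinct fixed points.
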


\begin{proof}
The equivalence between
\eqref{gen_equation_nonlin_cons_a}--\eqref{gen_equation_nonlin_cons_b}
and \eqref{gen_equation_id} when $g = \Id$ was established above. It
remains to show uniqueness under the hypothesis
$\|\Id - \nabla^2 f(z)\| < 1$ for all $z \in \X$.

Define $\phi(y) := y - \nabla f(y)$ and
$\Phi := \phi \circ \Pi_{\K}$. Observe that equation
\eqref{gen_equation_id} is equivalent to the fixed-point problem
$\Phi(y) = y$. We show that $\Phi$ is a contraction. By the mean
value theorem, for any $x, y \in \X$ there exists $z \in [x, y]$
such that
\[
\phi(y) - \phi(x)
= (y - x) - \bigl(\nabla f(y) - \nabla f(x)\bigr)
= \bigl(\Id - \nabla^2 f(z)\bigr)(y - x),
\]
and therefore
$\|\phi(y) - \phi(x)\| \leq \|\Id - \nabla^2 f(z)\|\,\|y - x\|
< \|y - x\|$.
Since the projection $\Pi_{\K}$ is non-expansive, the composition
$\Phi = \phi \circ \Pi_{\K}$ is also a contraction. By the
contraction mapping principle (Theorem~\ref{fixedpoint3}), $\Phi$ has
a unique fixed point, which is the unique solution of
\eqref{gen_equation_id}.
\end{proof}

\begin{remark}
In the particular case $f(x) = \frac{1}{2}x^T Q x + q^T x$, taking
$\K = \R^n_+$ and $\K = \LL^n$ recovers \cite[Prop.~1]{Bello:2016_4}
and \cite[Prop.~5]{Bello:2017_4}, respectively.
\end{remark}

A special case of problem \eqref{gen_conic_nonlinconst} arises when an
additional conic constraint is imposed on $x$. For a closed convex cone
$\C \subset \X$, the problem becomes
\begin{equation}\label{gen_dbconic_nonlinconst}
\begin{array}{cl}
\min & f(x) \\
\text{s.t.} & g(x) \in \K \\
            & x \in \C.
\end{array}
\end{equation}
Setting $\hat{g}(x) := (x,\, g(x))$ and $\hat{\C} := \C \times \K$,
problem \eqref{gen_dbconic_nonlinconst} reduces to
\eqref{gen_conic_nonlinconst} with constraint $\hat{g}(x) \in
\hat{\C}$. The dual cone decomposes as $\hat{\C}^* = \C^* \times \K^*$,
so the multiplier takes the form $\lambda = (\mu, \sigma) \in \C^*
\times \K^*$. Substituting into the conic projection equations,
\eqref{gen_equation_nonlin_cons_a} becomes
\begin{align*}
\nabla f(x) - (D\hat{g}(x))^* \Pi_{\hat{\C}^*}(\lambda)
&= \nabla f(x)
   - (\Id,\, Dg(x))^*
     \bigl(\Pi_{\C^*}(\mu),\, \Pi_{\K^*}(\sigma)\bigr) \\
&= \nabla f(x) - (Dg(x))^* \Pi_{\K^*}(\sigma)
   - \Pi_{\C^*}(\mu) = 0,
\end{align*}
and \eqref{gen_equation_nonlin_cons_b} becomes
\begin{align*}
\hat{g}(x) - \Pi_{\hat{\C}^*}(\lambda) + \lambda
&= \bigl(x,\, g(x)\bigr)
   - \bigl(\Pi_{\C^*}(\mu),\, \Pi_{\K^*}(\sigma)\bigr)
   + (\mu,\, \sigma) \\
&= \bigl(x - \Pi_{\C^*}(\mu) + \mu,\;
         g(x) - \Pi_{\K^*}(\sigma) + \sigma\bigr) = (0,\, 0).
\end{align*}
By Moreau's decomposition, the first component gives $x =
\Pi_{\C}(-\mu)$. Setting $y := -\mu$, the system reduces to
\begin{align}
\nabla f(\Pi_{\C}(y))
  - (Dg(\Pi_{\C}(y)))^* \Pi_{\K^*}(\sigma)
  - \Pi_{\C}(y) + y &= 0,
  \label{gen_equation_nonlin_cons_id_a} \\
g(\Pi_{\C}(y)) - \Pi_{\K^*}(\sigma) + \sigma &= 0.
  \label{gen_equation_nonlin_cons_id_b}
\end{align}
By Theorem~\ref{equiv_kkt_eq}, the KKT points of
\eqref{gen_dbconic_nonlinconst} are recovered as
$(\bar{x},\, \bar{\lambda})
= \bigl(\Pi_{\C}(\bar{y}),\, \Pi_{\K^*}(\bar{\sigma})\bigr)$,
where $(\bar{y}, \bar{\sigma})$ solves
\eqref{gen_equation_nonlin_cons_id_a}--\eqref{gen_equation_nonlin_cons_id_b}.


\section{Semi-smooth Newton Method for Nonlinear Conic Programming}
\label{section_semismoothNewt}

Having characterized the first-order KKT points of
\eqref{gen_conic_nonlinconst} via the conic projection equations
\eqref{gen_equation_nonlin_cons_a}--\eqref{gen_equation_nonlin_cons_b},
we now turn to solving this system. We define the residual map
\begin{equation}\label{H_def}
H(x,\lambda) :=
\begin{pmatrix}
\nabla f(x) - (Dg(x))^*\Pi_{\K^*}(\lambda) \\
g(x) - \Pi_{\K^*}(\lambda) + \lambda
\end{pmatrix},
\end{equation}
and apply a semi-smooth Newton method to find its zeros. Since $f$ and
$g$ are twice continuously differentiable and $\Pi_{\K^*}$ is
strongly semi-smooth, the Clarke generalized Jacobian of $H$ takes the
form
\begin{equation}\label{JH_def}
J_H(x,\lambda) =
\begin{pmatrix}
\nabla^2 f(x) - (D^2 g(x))^* \Pi_{\K^*}(\lambda)
  & -(Dg(x))^* V_{\K^*}(\lambda) \\
Dg(x) & \Id - V_{\K^*}(\lambda)
\end{pmatrix},
\end{equation}
where $V_{\K^*}(\lambda) \in \partial_C \Pi_{\K^*}(\lambda)$. The
standard semi-smooth Newton iteration reads as
\begin{equation}\label{newt_iter_standard}
J_H(x^k,\lambda^k)
\begin{pmatrix}
x^{k+1} - x^k \\
\lambda^{k+1} - \lambda^k
\end{pmatrix}
= -H(x^k,\lambda^k).
\end{equation}

\begin{remark}
When $g$ is linear and $\K = \{0\}$, iteration
\eqref{newt_iter_standard} reduces to the method studied in
\cite{Armijo:2025_4}. If the constraint $g(x) \in \K$ is absent
entirely, we recover the first method in \cite{Armijo:2023_4}.
\end{remark}

It is well known that the Newton direction is highly effective near a
solution but may encounter difficulties far from one. To improve
robustness, we decouple the search direction from the stepsize by
writing the iteration as
\begin{equation}\label{newt_iter_direction}
J_H(x^k,\lambda^k)
\begin{pmatrix}
d_x^k \\
d_\lambda^k
\end{pmatrix}
= -H(x^k,\lambda^k),
\end{equation}
and updating
\begin{equation}\label{newt_iter_update}
(x^{k+1},\, \lambda^{k+1})
= (x^k,\, \lambda^k)
  + \alpha_k\, (d_x^k,\, d_\lambda^k),
\end{equation}
where $\alpha_k > 0$ is chosen to ensure sufficient decrease in the
merit function
\begin{equation}\label{merit_def}
\theta(x,\lambda) := \tfrac{1}{2}\|H(x,\lambda)\|^2.
\end{equation}
\subsection{Convergence of the semi-smooth Newton method}

We begin with a key lemma establishing the strong semi-smoothness of
the merit function $\theta$, which is central to the convergence
analysis.

\begin{lemma}[Strong semi-smoothness of the merit function]
  \label{lema_H_semismooth}
Let $f$ and $g$ be twice continuously differentiable and let $\K$ be a
symmetric cone. Then $\theta(x,\lambda) = \frac{1}{2}\|H(x,\lambda)\|^2$
is strongly semi-smooth, that is,
\[
\theta(x + w_x,\, \lambda + w_\lambda)
= \theta(x,\lambda)
  + \nabla \theta(x + w_x,\, \lambda + w_\lambda)^T (w_x,\, w_\lambda)
  + O\!\left(\|(w_x,\, w_\lambda)\|^2\right),
\]
where $\nabla \theta(x,\lambda) \in \partial_C \theta(x,\lambda)$.
Moreover,
\begin{equation}\label{grad_theta}
\nabla \theta(x,\lambda)^T (w_x,\, w_\lambda)
= H(x,\lambda)^T J_H(x,\lambda)\, (w_x,\, w_\lambda).
\end{equation}
\end{lemma}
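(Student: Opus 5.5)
The plan is to first prove that $H$ is strongly semi-smooth, and then transfer this to $\theta = \tfrac12\|H\|^2$ through an algebraic identity for the squared norm.

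\textbf{Step 1: strong semi-smoothness of $H$.} Write $u:=(x,\lambda)$ and $w:=(w_x,w_\lambda)$. The second block of $H$ is $g(x) - \Pi_{\K^*}(\lambda) + \lambda$.
\begin{itemize}
\item $g$ is $C^2$, so its first-order Taylor expansion at the perturbed point has an $O(\|w_x\|^2)$ remainder on bounded sets.
\item $\K^*=\K$ for a symmetric cone, so $\Pi_{\K^*}$ is strongly semi-smooth by Theorem~\ref{proj_symm_semismth}.
\end{itemize}
The first block contains the product $(Dg(x))^*\Pi_{\K^*}(\lambda)$. Expand
\[
(Dg(x+w_x))^*\Pi_{\K^*}(\lambda+w_\lambda) - (Dg(x))^*\Pi_{\K^*}(\lambda)
\]
by adding and subtracting the mixed term $(Dg(x+w_x))^*\Pi_{\K^*}(\lambda)$. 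The piece $\bigl(Dg(x+w_x)-Dg(x)\bigr)^*\Pi_{\K^*}(\lambda)$ is a $C^1$ function of $x$ with locally Lipschitz derivative, assuming $D^2g$ is locally Lipschitz. If only continuity of $D^2g$ is available, this step yields semismoothness; for the quadratic order I will assume, as is implicit in the paper, that $f,g$ have locally Lipschitz second derivatives. The piece $(Dg(x+w_x))^*\bigl(\Pi_{\K^*}(\lambda+w_\lambda)-\Pi_{\K^*}(\lambda)\bigr)$ is handled by semi-smoothness of the projection. The cross terms are of the form $O(\|w_x\|)\cdot O(\|w_\lambda\|)$, using that $\Pi_{\K^*}$ is $1$-Lipschitz and $\|V_{\K^*}\|\le 1$ (Theorem~\ref{exis_diff_proj2}). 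The outcome is
\[
H(u+w) = H(u) + J_H(u+w)\,w + O(\|w\|^2),
\]
where $J_H(u+w)$ is the block matrix \eqref{JH_def}, evaluated at the perturbed point with any choice $V_{\K^*}(\lambda+w_\lambda)$.

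\textbf{Step 2: formula \eqref{grad_theta}.} The map $s\mapsto \tfrac12\|s\|^2$ is smooth. By the chain rule for Clarke Jacobians of a $C^1$ function composed with a Lipschitz map, the elements of $\partial_C\theta(u)$ are of the form $J_H(u)^*H(u)$. This gives $\nabla\theta(u)^T w = H(u)^T J_H(u)w$. Equality, rather than mere inclusion, holds at points of differentiability and passes to limits, which suffices for the generic element used here.

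\textbf{Step 3: transfer to $\theta$.} Set $a:=H(u+w)$ and $b:=H(u)$. The exact identity
\[
\tfrac12\|a\|^2-\tfrac12\|b\|^2 = a^T(a-b) - \tfrac12\|a-b\|^2
\]
gives
\[
\theta(u+w)-\theta(u) = H(u+w)^T\bigl(H(u+w)-H(u)\bigr) - \tfrac12\|H(u+w)-H(u)\|^2 .
\]
Substitute Step 1 into the first term: it becomes $H(u+w)^T J_H(u+w)w + H(u+w)^T O(\|w\|^2)$. The factor $H(u+w)$ is bounded locally, so this is $\nabla\theta(u+w)^T w + O(\|w\|^2)$. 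The second term is $O(\|w\|^2)$ because $H$ is locally Lipschitz.

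\textbf{Main obstacle.} The delicate point is the product term in Step 1. There one must make sure that the generalized Jacobian element is evaluated at the perturbed point $u+w$ in both factors, and that the resulting block structure genuinely matches \eqref{JH_def}. The plan is to check this directly with the add-and-subtract decomposition above, rather than relying on a general product rule for Clarke Jacobians, which only gives inclusions.
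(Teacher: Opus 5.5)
Your proof is correct. It follows the paper's overall skeleton: first $\Pi_{\K^*}$, then $H$, then $\theta=\tfrac12\|H\|^2$, then \eqref{grad_theta}. The individual steps, however, are carried out differently.

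For the projection, the paper combines Moreau's identity $\Pi_{\K^*}(\lambda)=\lambda+\Pi_{\K}(-\lambda)$ with Theorem~\ref{metr_proj_str_semi}, whereas you use $\K^*=\K$ and Theorem~\ref{proj_symm_semismth}. Under the stated hypothesis these are interchangeable. Since the rest of your argument only uses strong semi-smoothness of $\Pi_{\K^*}$, it carries over to any cone for which that property is known.

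The real difference is in the next two steps. The paper handles both $H$ and $\theta=\phi\circ H$ with the black-box principle ``smooth composed with strongly semi-smooth is strongly semi-smooth,'' and justifies \eqref{grad_theta} by a density remark. You instead prove the expansion of $H$ by an explicit add-and-subtract estimate. You then transfer it to $\theta$ through the exact identity $\tfrac12\|a\|^2-\tfrac12\|b\|^2=a^T(a-b)-\tfrac12\|a-b\|^2$.

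The paper's route is shorter, but yours buys two things.

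First, it shows that Lipschitz continuity of $\nabla^2 f$ and $D^2 g$ is genuinely needed, not merely convenient. The paper's composition step tacitly requires $\nabla f$ and $Dg$ to be strongly semi-smooth, and $C^2$ alone does not give this. Take $\X=\Y=\R$, $\K=\R_+$, $g\equiv 0$ and $f(x)=x+\tfrac25|x|^{5/2}$. Then $\theta$ is $C^1$, and $\theta(h,0)-\theta(0,0)-\nabla\theta(h,0)^T(h,0)=-\tfrac12 h|h|^{1/2}-h^2|h|$, which is not $O(h^2)$. So your extra assumption belongs in the statement of the lemma.

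Second, your Steps~1 and~3 establish the expansions for the concrete elements $J_H(u+w)$ and $J_H(u+w)^TH(u+w)$, with an arbitrary choice of $V_{\K^*}(\lambda+w_\lambda)$. That is exactly what the proof of Theorem~\ref{thm_quad_conv} and the estimate \eqref{theta_order} use. Hence whether $J_H^TH$ actually lies in $\partial_C\theta$ (your Step~2, the paper's density argument) does not matter downstream.

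If you want Step~2 airtight anyway, note what ``passing to limits'' relies on. It exhausts $\partial_C\theta(u)$ only because the generalized gradient of a scalar Lipschitz function can be computed along sequences avoiding any null set. In particular, it can avoid the set where $H$ itself is not differentiable.
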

\begin{proof}
Since $f$ and $g$ are smooth, the only nonsmooth component of $H$ is
$\Pi_{\K^*}(\lambda)$. By Moreau's decomposition,
\[
\Pi_{\K^*}(\lambda) = \lambda + \Pi_{\K}(-\lambda),
\]
which expresses $\Pi_{\K^*}$ as an affine function of $\Pi_{\K}$.
Since affine maps preserve strong semi-smoothness and
Theorem~\ref{metr_proj_str_semi} guarantees that $\Pi_{\K}$ is
strongly semi-smooth, it follows that $\Pi_{\K^*}$ is strongly
semi-smooth. Therefore $H$, being a composition of smooth functions
with $\Pi_{\K^*}$, is strongly semi-smooth. Since
$\phi(t) := \frac{1}{2}\|t\|^2$ is smooth and its composition with a
strongly semi-smooth function is again strongly semi-smooth, we
conclude that $\theta = \phi \circ H$ is strongly semi-smooth.

It remains to establish \eqref{grad_theta}. At any differentiability
point $(x,\lambda) \in D_\theta$, the chain rule gives
\[
\nabla \theta(x,\lambda)^T (w_x,\, w_\lambda)
= H(x,\lambda)^T J_H(x,\lambda)\, (w_x,\, w_\lambda),
\]
where $J_H(x,\lambda)$ is given by \eqref{JH_def} with
$\Pi'_{\K^*}(\lambda)$ in place of $V_{\K^*}(\lambda)$. Since
$D_\theta$ is dense and both sides are continuous in the Clarke sense,
the identity extends to all $(x,\lambda)$ by replacing
$\Pi'_{\K^*}(\lambda)$ with any
$V_{\K^*}(\lambda) \in \partial_C \Pi_{\K^*}(\lambda)$, yielding
\eqref{grad_theta}.
\end{proof}

A useful consequence of Lemma~\ref{lema_H_semismooth} is that the
Newton direction is a descent direction for $\theta$. When
$J_H(x^k,\lambda^k)$ is nonsingular and $(x^k,\lambda^k)$ is not a
solution of
\eqref{gen_equation_nonlin_cons_a}--\eqref{gen_equation_nonlin_cons_b},
it follows from \eqref{newt_iter_direction} and \eqref{grad_theta}
that
\begin{align*}
\nabla \theta(x^k,\lambda^k)^T (d_x^k,\, d_\lambda^k)
&= H(x^k,\lambda^k)^T J_H(x^k,\lambda^k)\, (d_x^k,\, d_\lambda^k) \\
&= -H(x^k,\lambda^k)^T H(x^k,\lambda^k) \\
&= -\|H(x^k,\lambda^k)\|^2 \\
&< 0.
\end{align*}

When $J_H(x^k,\lambda^k)$ is singular, the Newton system
\eqref{newt_iter_direction} cannot be solved directly. In this case,
we replace it with the regularized normal equation
\begin{equation}\label{newt_iter_singular}
\left(J_H^k{}^T J_H^k
  + \sqrt{\theta(x^k,\lambda^k)}\,\Id\right) (d_x^k,\, d_\lambda^k)
= -J_H^k{}^T H(x^k,\lambda^k),
\end{equation}
where we write $J_H^k := J_H(x^k,\lambda^k)$ for brevity. The
coefficient matrix in \eqref{newt_iter_singular} is symmetric positive
definite, so the system always has a unique solution. Moreover, the
resulting direction is still a descent direction for $\theta$:
\begin{align*}
\nabla \theta(x^k,\lambda^k)^T (d_x^k,\, d_\lambda^k)
&= H(x^k,\lambda^k)^T J_H^k\, (d_x^k,\, d_\lambda^k) \\
&= -(d_x^k,\, d_\lambda^k)^T
   \!\left(J_H^k{}^T J_H^k
   + \sqrt{\theta(x^k,\lambda^k)}\,\Id\right)
   (d_x^k,\, d_\lambda^k) \\
&< 0,
\end{align*}
where the last inequality holds because the matrix is positive definite
and $(d_x^k, d_\lambda^k) \neq 0$.

The following lemma provides a sufficient condition for the stepsize
$\alpha_k = 1$ to satisfy the Armijo condition, ensuring that full
Newton steps are eventually accepted.

\begin{lemma}[Sufficient condition for a unit stepsize;
  {\cite[Lem.~5.2]{Qi:2006_4}}]\label{lemma5.2_qi}
Suppose there exists a constant $\rho > 0$ such that
\[
\nabla \theta(x^k,\lambda^k)^T (d_x^k,\, d_\lambda^k)
\leq -\rho\, \|(d_x^k,\, d_\lambda^k)\|^2
\]
for all $k$. Then, for every $c \in (0, \tfrac{1}{2})$, there exists
$\hat{k} \geq 0$ such that
\[
\theta(x^k + d_x^k,\, \lambda^k + d_\lambda^k)
\leq \theta(x^k,\lambda^k)
  + c\, \nabla \theta(x^k,\lambda^k)^T (d_x^k,\, d_\lambda^k)
\]
for all $k \geq \hat{k}$, i.e., the Armijo linesearch accepts the
unit stepsize $\alpha_k = 1$.
\end{lemma}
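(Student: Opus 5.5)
The argument rests on the strong semi-smoothness expansion of Lemma~\ref{lema_H_semismooth}, applied at the base point $z^k:=(x^k,\lambda^k)$ with increment $d^k:=(d_x^k,d_\lambda^k)$. The first step is to show $d^k\to 0$. The hypothesis together with the Cauchy--Schwarz inequality gives $\rho\|d^k\|^2\le -\nabla\theta(z^k)^Td^k\le \|\nabla\theta(z^k)\|\,\|d^k\|$, so $\|d^k\|\le \|\nabla\theta(z^k)\|/\rho$. By \eqref{grad_theta}, $\|\nabla\theta(z^k)\| \le \|J_H(z^k)\|\,\|H(z^k)\|$. In the setting of Qi--Sun the iterates stay in a bounded region near a solution where $H(z^k)\to 0$, and $J_H$ is bounded there because $f,g$ are $C^2$ and $\|V_{\K^*}\|\le 1$ (Theorem~\ref{exis_diff_proj2}). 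Hence $\|d^k\|\to 0$. I would make explicit that the lemma is meant along a sequence converging to a zero of $H$, since that is how it is used later, and treat this convergence as implicit in the statement.

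The second step is to compare $\nabla\theta$ at $z^k+d^k$ with $\nabla\theta$ at $z^k$. A naive application of Lemma~\ref{lema_H_semismooth} does not give this directly. Writing $\theta(z^k)=\theta\bigl((z^k+d^k)-d^k\bigr)$ and expanding about $z^k+d^k$ would use the generalized gradient at the wrong point. The cleaner route works with $H$ itself. Strong semi-smoothness of $H$, established in the proof of Lemma~\ref{lema_H_semismooth}, gives
\[
H(z^k+d^k)=H(z^k)+J_H(z^k+d^k)\,d^k+O(\|d^k\|^2).
\]
I would then invoke the equivalent ``directional'' form of strong semi-smoothness (Qi--Sun), namely $H(z^k+d^k)-H(z^k)-J_H(z^k)d^k=O(\|d^k\|^2)$ up to the choice of element, or pass through the directional derivative $H'(z^k;d^k)$. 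Either form yields
\[
\theta(z^k+d^k)=\theta(z^k)+\nabla\theta(z^k)^Td^k+O(\|d^k\|^2).
\]
This holds because $\theta=\tfrac12\|H\|^2$ and the quadratic cross term $\tfrac12\|J_Hd^k\|^2$ is itself $O(\|d^k\|^2)$.

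The third step uses the hypothesis to absorb the remainder. Write
\[
\theta(z^k+d^k)-\theta(z^k)-c\,\nabla\theta(z^k)^Td^k=(1-c)\,\nabla\theta(z^k)^Td^k+O(\|d^k\|^2).
\]
This alone is not enough, because the $O(\|d^k\|^2)$ term has a constant that could be larger than $(1-c)\rho$. The actual mechanism in Qi--Sun is sharper: the expansion error is $o(\|d^k\|^2)$ relative to $\rho\|d^k\|^2$. This comes from combining the Newton equation \eqref{newt_iter_direction} with superlinear convergence. Then $\|H(z^k+d^k)\|=o(\|H(z^k)\|)$, so $\theta(z^k+d^k)=o(\theta(z^k))$, while $-\nabla\theta(z^k)^Td^k=\|H(z^k)\|^2=2\theta(z^k)$. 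The Armijo inequality therefore reduces to $o(\theta(z^k))\le(1-2c)\theta(z^k)$, which holds for large $k$ precisely because $c<\tfrac12$.

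The main obstacle is this step: obtaining $\|H(z^k+d^k)\|=o(\|H(z^k)\|)$. This requires semi-smoothness of $H$ together with uniform boundedness of $J_H(z^k)^{-1}$, via the standard estimate $\|z^k+d^k-\bar z\|=o(\|z^k-\bar z\|)$. The $\rho$-condition supplies this boundedness, since it forces $\|d^k\|\lesssim\|H(z^k)\|$. A Lipschitz bound on $H$ then converts the estimate into one on the residuals.
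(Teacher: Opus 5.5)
The paper gives no proof of this lemma; it is imported from Qi--Sun, and the statement as printed does not tie $z^k$ to a solution. You are right to add convergence $z^k\to\bar z$ with $H(\bar z)=0$. However, your argument needs more than that, and the step where you extract the rest from the $\rho$-condition does not work.

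For the Newton direction, \eqref{grad_theta} gives $\nabla\theta(z^k)^Td^k=-\|H(z^k)\|^2$. So the $\rho$-condition says exactly $\|J_H(z^k)^{-1}H(z^k)\|\le\rho^{-1/2}\|H(z^k)\|$. This bounds $J_H(z^k)^{-1}$ along the single vector $H(z^k)$, not in operator norm: with $J_k=\mathrm{diag}(1,1/k)$ and $H^k=(1/k,0)$ the condition holds with $\rho=1$, while $\|J_k^{-1}\|=k$.

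The superlinear estimate, however, reads $z^k+d^k-\bar z=-J_H(z^k)^{-1}r^k$, where $r^k:=H(z^k)-H(\bar z)-J_H(z^k)(z^k-\bar z)$ is the semi-smoothness remainder at $\bar z$. Nothing forces $r^k$ to be aligned with $H(z^k)$, so $\|z^k+d^k-\bar z\|=o(\|z^k-\bar z\|)$ does not follow.

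The passage to residuals has the same defect. Lipschitz continuity gives $\|H(z^k+d^k)\|\le L\|z^k+d^k-\bar z\|$. But turning $o(\|z^k-\bar z\|)$ into $o(\|H(z^k)\|)$ needs the reverse inequality $\|z^k-\bar z\|=O(\|H(z^k)\|)$, a local error bound. That bound again comes from $\|z^k-\bar z\|\le\|J_H(z^k)^{-1}\|\,(\|H(z^k)\|+\|r^k\|)$, i.e.\ from a uniform bound on $\|J_H(z^k)^{-1}\|$.

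Your second step is also unjustified. Strong semi-smoothness controls $H(z+d)-H(z)-Vd$ only for $V\in\partial_C H(z+d)$. With the Jacobian taken at the base point $z^k$, the remainder is of order $\|d^k\|$ whenever a kink of $\Pi_{\K^*}$ lies between $z^k$ and $z^k+d^k$; think of $\Pi_{\R_+}$ at $z=\varepsilon$ with $d=-2\varepsilon$. The pointwise directional expansion at $z^k$ is also not uniform in $k$. You abandon that route, but the claimed $O(\|d^k\|^2)$ expansion of $\theta$ about $z^k$ should not be asserted.

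What closes the argument is an extra hypothesis. One option is superlinear convergence of the full step, $\|z^k+d^k-\bar z\|=o(\|z^k-\bar z\|)$, as in the standard Facchinei-type version of this unit-step lemma. The other, in this paper's framework, is the uniform bound $\|J_H(z^k)^{-1}\|\le C_2$ of Theorem~\ref{thm_quad_conv}.

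With the latter, your residual route goes through:
\begin{itemize}
\item Strong semi-smoothness at $\bar z$ gives $\|r^k\|=O(\|z^k-\bar z\|^2)$.
\item Hence $\|z^k+d^k-\bar z\|\le C_2\|r^k\|$ and, for large $k$, $\|z^k-\bar z\|\le 2C_2\|H(z^k)\|$.
\item Therefore $\theta(z^k+d^k)=O(\theta(z^k)^2)$.
\item Since $\nabla\theta(z^k)^Td^k=-2\theta(z^k)$, the Armijo test becomes $\theta(z^k+d^k)\le(1-2c)\theta(z^k)$, which holds eventually because $c<\tfrac12$.
\end{itemize}
In this argument $\rho$ plays no role; it is in fact implied, with $\rho=C_2^{-2}$.

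Finally, the identity $\nabla\theta(z^k)^Td^k=-2\theta(z^k)$ holds only for the exact Newton direction. The paper also applies the lemma to the regularized direction \eqref{newt_iter_singular}. There, set $u:=(J_H^k)^TH(z^k)$ and $A^k:=(J_H^k)^TJ_H^k+\sqrt{\theta(z^k)}\,\Id$. Then $-\nabla\theta(z^k)^Td^k=u^T(A^k)^{-1}u\le\|H(z^k)\|^2$, because $J_H^k(A^k)^{-1}(J_H^k)^T$ has all eigenvalues in $[0,1)$. So the same sufficient condition $\theta(z^k+d^k)\le(1-2c)\theta(z^k)$ applies, given the analogous distance estimates.
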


We now verify that both search directions satisfy the hypothesis of
Lemma~\ref{lemma5.2_qi}.

\begin{proposition}[Sufficient descent property]\label{prop_descent_rho}
The Newton direction given by \eqref{newt_iter_direction} and the
regularized direction given by \eqref{newt_iter_singular} both satisfy
\[
\nabla \theta(x^k,\lambda^k)^T (d_x^k,\, d_\lambda^k)
\leq -\rho_k\, \|(d_x^k,\, d_\lambda^k)\|^2
\]
for some constant $\rho_k > 0$ at each iteration. In particular, if
$\rho := \inf_k\, \rho_k > 0$, the hypothesis of
Lemma~\ref{lemma5.2_qi} is satisfied.
\end{proposition}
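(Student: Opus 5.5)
We treat the two search directions separately; in each case we exhibit an explicit $\rho_k>0$ that depends only on quantities available at iteration $k$. Throughout, write $d^k:=(d_x^k,d_\lambda^k)$ and $J_H^k:=J_H(x^k,\lambda^k)$, and assume $H(x^k,\lambda^k)\neq 0$, so that $d^k\neq 0$. The only tool needed is identity \eqref{grad_theta} of Lemma~\ref{lema_H_semismooth}, which gives $\nabla\theta(x^k,\lambda^k)^T d^k = H(x^k,\lambda^k)^T J_H^k d^k$. Combined with the defining linear system of each direction, this turns the directional derivative into a quadratic form in $d^k$.

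\textbf{Regularized direction \eqref{newt_iter_singular}.} This case is the easier one. Taking transposes in \eqref{newt_iter_singular} gives $H^TJ_H^k=-(d^k)^T(J_H^k{}^TJ_H^k+\sqrt{\theta}\,\Id)$. Therefore
\[
\nabla\theta^Td^k=-\|J_H^kd^k\|^2-\sqrt{\theta(x^k,\lambda^k)}\,\|d^k\|^2\le-\sqrt{\theta(x^k,\lambda^k)}\,\|d^k\|^2 .
\]
More sharply, by Lemma~\ref{eigen_mat}(1) applied to the positive definite matrix $J_H^k{}^TJ_H^k+\sqrt{\theta}\,\Id$, the bound holds with $\rho_k=\lambda_{\min}(J_H^k{}^TJ_H^k)+\sqrt{\theta(x^k,\lambda^k)}$. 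Since $\theta(x^k,\lambda^k)>0$ at a non-solution, this $\rho_k$ is positive.

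\textbf{Newton direction \eqref{newt_iter_direction} with $J_H^k$ nonsingular.} Here $H=-J_H^kd^k$, so
\[
\nabla\theta^Td^k=H^TJ_H^kd^k=-\|J_H^kd^k\|^2=-(d^k)^T J_H^k{}^TJ_H^k\,d^k .
\]
The matrix $E_k:=J_H^k{}^TJ_H^k$ is symmetric positive definite because $J_H^k$ is invertible. Hence $(d^k)^TE_kd^k\ge\lambda_{\min}(E_k)\|d^k\|^2$. This follows from Lemma~\ref{eigen_mat}(1) applied with $-E_k$, or equivalently from the Rayleigh quotient bound. We may then take $\rho_k=\lambda_{\min}(E_k)=\sigma_{\min}(J_H^k)^2$. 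By Lemma~\ref{eigen_mat}(2) this can also be written as $\rho_k=1/\lambda_{\max}(E_k^{-1})=1/\|(J_H^k)^{-1}\|^2>0$.

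Finally, setting $\rho:=\inf_k\rho_k$, the inequality $\nabla\theta^Td^k\le-\rho\|d^k\|^2$ holds for every $k$. When $\rho>0$ this is exactly the hypothesis of Lemma~\ref{lemma5.2_qi}. The main obstacle is that positivity of $\rho$ is not automatic: in the regularized case $\sqrt{\theta}\to0$ near a solution, and in the Newton case $\sigma_{\min}(J_H^k)$ could degenerate. This is why the statement assumes $\inf_k\rho_k>0$ rather than proving it. The natural sufficient condition, which I would point out, is uniform boundedness of $\|(J_H^k)^{-1}\|$, i.e.\ a BD-regularity-type condition at the limit point; it guarantees $\rho>0$ eventually via the Newton-case formula.
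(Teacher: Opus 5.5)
Your argument is correct and is essentially the paper's. Both combine \eqref{grad_theta} with the defining linear system and a Rayleigh-quotient bound, and both arrive at the same constants: $\rho_k=\lambda_{\min}\bigl((J_H^k)^TJ_H^k\bigr)$ in the Newton case and $\rho_k=\lambda_{\min}\bigl((J_H^k)^TJ_H^k+\sqrt{\theta(x^k,\lambda^k)}\,\Id\bigr)$ in the regularized case. The only difference is that you write the directional derivative directly as a quadratic form in $d^k$, whereas the paper bounds $\|d^k\|^2$ from above via $\lambda_{\max}$ of an inverse matrix.

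One harmless slip: $H(x^k,\lambda^k)\neq 0$ does not force $d^k\neq 0$ for the regularized direction, since $d^k$ vanishes when $(J_H^k)^TH(x^k,\lambda^k)=0$. The inequality then holds trivially, so nothing in your proof depends on it.
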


\begin{proof}
We write $J_H^k := J_H(x^k,\lambda^k)$ and $H^k := H(x^k,\lambda^k)$
for brevity.

\medskip
\noindent\textbf{(i) Nonsingular case.}
Since $(d_x^k, d_\lambda^k) = -(J_H^k)^{-1} H^k$, we have
\begin{align*}
\|(d_x^k,\, d_\lambda^k)\|^2
&= (H^k)^T (J_H^k{}^T)^{-1} (J_H^k)^{-1} H^k \\
&= (H^k)^T (J_H^k{}^T J_H^k)^{-1} H^k \\
&\leq \lambda_{\max}\!\left((J_H^k{}^T J_H^k)^{-1}\right)
      \|H^k\|^2 \\
&= \frac{1}{\rho_k}\, \|H^k\|^2,
\end{align*}
where $\rho_k := \lambda_{\min}(J_H^k{}^T J_H^k) > 0$, and the last
equality uses Lemma~\ref{eigen_mat}. Since
$\nabla \theta(x^k,\lambda^k)^T (d_x^k, d_\lambda^k) = -\|H^k\|^2$
by \eqref{grad_theta} and \eqref{newt_iter_direction}, we conclude
\[
\nabla \theta(x^k,\lambda^k)^T (d_x^k,\, d_\lambda^k)
= -\|H^k\|^2
\leq -\rho_k\, \|(d_x^k,\, d_\lambda^k)\|^2.
\]

\medskip
\noindent\textbf{(ii) Singular case.}
Define $A^k := J_H^k{}^T J_H^k + \sqrt{\theta(x^k,\lambda^k)}\,\Id$,
which is symmetric positive definite. From \eqref{newt_iter_singular},
$(d_x^k, d_\lambda^k) = -(A^k)^{-1} J_H^k{}^T H^k$, and by
\eqref{grad_theta},
\[
\nabla \theta(x^k,\lambda^k)^T (d_x^k,\, d_\lambda^k)
= (H^k)^T J_H^k\, (d_x^k,\, d_\lambda^k)
= -(J_H^k{}^T H^k)^T (A^k)^{-1} (J_H^k{}^T H^k).
\]
Similarly,
\(
\|(d_x^k,\, d_\lambda^k)\|^2
= (J_H^k{}^T H^k)^T (A^k)^{-2} (J_H^k{}^T H^k).
\)
Setting $u := J_H^k{}^T H^k$ and using
that $\lambda_{\max}\!\left((A^k)^{-1}\right)(A^k)^{-1}-(A^k)^{-2}$ is positive semidefinite 
(Lemma~\ref{eigen_mat}), we obtain
\[
\|(d_x^k,\, d_\lambda^k)\|^2
= u^T (A^k)^{-2} u
\leq \frac{1}{\rho_k}
     \left(-\nabla \theta(x^k,\lambda^k)^T
       (d_x^k,\, d_\lambda^k)\right),
\]
where $\rho_k := \lambda_{\min}(A^k) \geq \sqrt{\theta(x^k,\lambda^k)} > 0$
whenever $(x^k,\lambda^k)$ is not a solution. Rearranging gives
$\nabla \theta(x^k,\lambda^k)^T (d_x^k,\, d_\lambda^k)
\leq -\rho_k\, \|(d_x^k,\, d_\lambda^k)\|^2$.
\end{proof}

\begin{remark}
The condition $\rho := \inf_k \rho_k > 0$ in
Proposition~\ref{prop_descent_rho} is satisfied, in particular, when
the Jacobian $J_H(\bar{x}, \bar{\lambda})$ is nonsingular at the
solution, which is a standard assumption for Newton-type methods to
achieve local quadratic convergence. When the Jacobian is singular at
the solution, $\rho_k$ may tend to zero, and the unit stepsize is no
longer guaranteed to be eventually accepted; nevertheless, the
linesearch still produces a sufficient decrease at each iteration, so
convergence of the iterates is not lost.
\end{remark}

With a well-defined iteration producing descent directions for the
merit function $\theta$, it remains to select an appropriate stepsize
$\alpha_k$. Regardless of whether $J_H(x^k,\lambda^k)$ is singular or
not, we employ a classical Armijo backtracking linesearch. For a fixed
parameter $c \in (0, \tfrac{1}{2})$, the Armijo condition reads
\begin{equation}\label{armijo_linsrch}
\theta(x^k + \alpha_k\, d_x^k,\, \lambda^k + \alpha_k\, d_\lambda^k)
\leq \theta(x^k,\lambda^k)
  + \alpha_k\, c\, \nabla \theta(x^k,\lambda^k)^T
    (d_x^k,\, d_\lambda^k).
\end{equation}
When $J_H(x^k,\lambda^k)$ is nonsingular, recalling that
$\nabla \theta(x^k,\lambda^k)^T (d_x^k, d_\lambda^k)
= -\|H(x^k,\lambda^k)\|^2$, condition \eqref{armijo_linsrch}
reduces to
\begin{equation}\label{armijo_nonsing}
\tfrac{1}{2}\left\|H(x^k + \alpha_k\, d_x^k,\,
  \lambda^k + \alpha_k\, d_\lambda^k)\right\|^2
\leq \left(\tfrac{1}{2} - \alpha_k\, c\right)
     \|H(x^k,\lambda^k)\|^2.
\end{equation}
When $J_H(x^k,\lambda^k)$ is singular, the right-hand side of
\eqref{armijo_linsrch} takes the form
\begin{equation}\label{armijo_sing}
\tfrac{1}{2}\|H(x^k,\lambda^k)\|^2
- \alpha_k\, c\, (d_x^k,\, d_\lambda^k)^T
  \!\left(J_H(x^k,\lambda^k)^T J_H(x^k,\lambda^k)
  + \sqrt{\theta(x^k,\lambda^k)}\,\Id\right)
  (d_x^k,\, d_\lambda^k).
\end{equation}

We conclude this subsection with an estimate relating the merit
function to the distance to a solution
$(\bar{x}, \bar{\lambda})$. Suppose that
$\|J_H(x^k,\lambda^k)\| \leq C_1$ for some constant $C_1 > 0$.
Since $H(\bar{x}, \bar{\lambda}) = 0$ and
$\theta(\bar{x}, \bar{\lambda}) = 0$, applying
Lemma~\ref{lema_H_semismooth} yields
\begin{align*}
\theta(x^k,\lambda^k)
&= \theta(\bar{x},\bar{\lambda})
   + \nabla \theta(x^k,\lambda^k)^T
     \bigl((x^k,\lambda^k) - (\bar{x},\bar{\lambda})\bigr)
   + O\!\left(\|(x^k,\lambda^k)
     - (\bar{x},\bar{\lambda})\|^2\right) \\
&= H(x^k,\lambda^k)^T J_H(x^k,\lambda^k)\,
   \bigl((x^k,\lambda^k) - (\bar{x},\bar{\lambda})\bigr)
   + O\!\left(\|(x^k,\lambda^k)
     - (\bar{x},\bar{\lambda})\|^2\right) \\
&\leq \|J_H(x^k,\lambda^k)\|\,\|H(x^k,\lambda^k)\|\,
      \|(x^k,\lambda^k) - (\bar{x},\bar{\lambda})\|
   + O\!\left(\|(x^k,\lambda^k)
     - (\bar{x},\bar{\lambda})\|^2\right) \\
&\leq C_1\,\|H(x^k,\lambda^k)\|\,
      \|(x^k,\lambda^k) - (\bar{x},\bar{\lambda})\|
   + O\!\left(\|(x^k,\lambda^k)
     - (\bar{x},\bar{\lambda})\|^2\right).
\end{align*}
Since $\|H(x^k,\lambda^k)\| = \sqrt{2\,\theta(x^k,\lambda^k)}$, the
left-hand side appears on both sides of the inequality, giving
\begin{equation}\label{theta_order}
\theta(x^k,\lambda^k)
= O\!\left(\|(x^k,\lambda^k)
  - (\bar{x},\bar{\lambda})\|^2\right),
\end{equation}
and consequently
\begin{equation}\label{sqrt_theta_order}
\sqrt{\theta(x^k,\lambda^k)}
= O\!\left(\|(x^k,\lambda^k)
  - (\bar{x},\bar{\lambda})\|\right).
\end{equation}

\begin{theorem}[Local quadratic convergence of the semi-smooth Newton
  method]\label{thm_quad_conv}
Suppose $(\bar{x}, \bar{\lambda})$ is a solution of
\eqref{gen_equation_nonlin_cons_a}--\eqref{gen_equation_nonlin_cons_b},
that is, $H(\bar{x}, \bar{\lambda}) = 0$. Assume that there exist
constants $C_1, C_2, C_3 > 0$ such that, for all $k$:
\begin{enumerate}
\item $\|J_H(x^k,\lambda^k)\| \leq C_1$,
\item $\|J_H(x^k,\lambda^k)^{-1}\| \leq C_2$ whenever
  $J_H(x^k,\lambda^k)$ is nonsingular,
\item $\left\|\left(J_H(x^k,\lambda^k)^T J_H(x^k,\lambda^k)
  + \sqrt{\theta(x^k,\lambda^k)}\,\Id\right)^{-1}\right\| \leq C_3$
  whenever $J_H(x^k,\lambda^k)$ is singular.
\end{enumerate}
If the semi-smooth Newton method generates a sequence
$(x^k, \lambda^k)$ converging to $(\bar{x}, \bar{\lambda})$, then the
convergence is quadratic.
\end{theorem}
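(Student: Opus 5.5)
We propose to argue in the standard semi-smooth Newton way, writing $z^k := (x^k,\lambda^k)$, $\bar z := (\bar x,\bar\lambda)$, $e^k := z^k - \bar z$, $J_H^k := J_H(z^k)$, and $H^k := H(z^k)$. The first step is to show that the unit step is eventually accepted, so that $z^{k+1} = z^k + d^k$ for all large $k$. Proposition~\ref{prop_descent_rho} gives the sufficient descent inequality with constant $\rho_k$. Under hypotheses 2 and 3, $\rho_k$ is bounded below uniformly: in the nonsingular case $\rho_k = \lambda_{\min}(J_H^{k\,T}J_H^k) = \|(J_H^k)^{-1}\|^{-2} \geq C_2^{-2}$, and in the singular case $\rho_k = \lambda_{\min}(A^k) \geq C_3^{-1}$. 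Hence $\rho := \inf_k \rho_k > 0$, and Lemma~\ref{lemma5.2_qi} yields $\alpha_k = 1$ for all $k \geq \hat k$. From then on we only need to analyze the pure (possibly regularized) Newton step.

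The key estimate is the strong semi-smoothness of $H$, established inside the proof of Lemma~\ref{lema_H_semismooth} via Theorem~\ref{metr_proj_str_semi} and the smoothness of $f,g$. Since $H(\bar z) = 0$, it gives
\[
H^k - J_H^k e^k = H(z^k) - H(\bar z) - J_H^k (z^k - \bar z) = O(\|e^k\|^2).
\]
To apply this, $J_H^k$ must be an element of the generalized Jacobian of $H$ at $z^k$. This holds because $\nabla^2 f$ and $D^2 g$ are continuous, so the smooth blocks introduce only $o$ or $O(\|e^k\|^2)$ errors. To get exactly $O(\|e^k\|^2)$ we also need local Lipschitz continuity of the second derivatives, or at least that $f,g$ are $C^2$ with Lipschitz Hessians near $\bar z$; I would state this tacitly, as the paper does.

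In the \emph{nonsingular case} we have
\[
e^{k+1} = e^k - (J_H^k)^{-1}H^k = (J_H^k)^{-1}\bigl(J_H^k e^k - H^k\bigr),
\]
so $\|e^{k+1}\| \leq C_2\, O(\|e^k\|^2)$.

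In the \emph{singular case}, set $A^k := J_H^{k\,T}J_H^k + \mu_k \Id$ with $\mu_k := \sqrt{\theta(z^k)}$. Then
\[
e^{k+1} = e^k - (A^k)^{-1}J_H^{k\,T}H^k = (A^k)^{-1}\Bigl[\mu_k e^k + J_H^{k\,T}\bigl(J_H^k e^k - H^k\bigr)\Bigr].
\]
We bound the two pieces separately:
\[
\|(A^k)^{-1}J_H^{k\,T}(J_H^k e^k - H^k)\| \leq C_3 C_1\, O(\|e^k\|^2),
\qquad
\|(A^k)^{-1}\mu_k e^k\| \leq C_3\,\mu_k\,\|e^k\| = O(\|e^k\|^2).
\]
The second bound uses \eqref{sqrt_theta_order}, namely $\mu_k = O(\|e^k\|)$, which in turn relies on hypothesis 1. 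Combining the two cases gives $\|e^{k+1}\| \leq C\|e^k\|^2$ for all large $k$, which is quadratic convergence.

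The main obstacle is making the semi-smoothness estimate rigorous with the Jacobian evaluated at the \emph{current} point $z^k$ rather than at $\bar z$. This is exactly the form in the definition of strong semi-smoothness (the term $V_f(x+h)\,h$ with $x = \bar z$ and $h = e^k$), so we would invoke it directly. We would then check that the block formula \eqref{JH_def} is indeed a Clarke Jacobian element, i.e.\ that the chain rule holds for the composite $(Dg(x))^*\Pi_{\K^*}(\lambda)$. If the chain rule is only an inclusion, we would instead work with the explicit expansion: Taylor-expand the smooth parts and apply strong semi-smoothness of $\Pi_{\K^*}$ to the $\lambda$-part alone. A cross term of the form $(Dg(x^k)-Dg(\bar x))^*(\Pi_{\K^*}(\lambda^k)-\Pi_{\K^*}(\bar\lambda))$ then appears, and it is $O(\|e^k\|^2)$ by Lipschitz continuity of both factors.
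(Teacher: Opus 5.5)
Your proposal is correct and follows the paper's proof essentially step for step. It obtains eventual acceptance of the unit step from Lemma~\ref{lemma5.2_qi} and Proposition~\ref{prop_descent_rho}, and proves $\|e^{k+1}\|\le C_2\,O(\|e^k\|^2)$ in the nonsingular case. In the singular case it uses the splitting $A^k e^k-(J_H^k)^T H^k=\sqrt{\theta^k}\,e^k+(J_H^k)^T(J_H^k e^k-H^k)$ together with \eqref{sqrt_theta_order}.

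You are slightly more careful than the paper in two places. First, you derive $\inf_k\rho_k\ge\min\{C_2^{-2},C_3^{-1}\}>0$ from hypotheses 2--3, which the paper's proof needs but does not check. Second, you correctly flag that the $O(\|e^k\|^2)$ estimate for the smooth blocks of $H$ tacitly requires Lipschitz second derivatives of $f$ and $g$.
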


\begin{proof}
We write $J_H^k := J_H(x^k,\lambda^k)$,
$H^k := H(x^k,\lambda^k)$ and $\theta^k := \theta(x^k,\lambda^k)$ for brevity.

By Lemma~\ref{lemma5.2_qi} and
Proposition~\ref{prop_descent_rho}, there exists an index $\hat{k}$
such that the unit stepsize $\alpha_k = 1$ satisfies the Armijo
condition \eqref{armijo_linsrch} for all $k \geq \hat{k}$. We
consider the two cases separately.

\medskip
\noindent\textbf{(i) Nonsingular case.}
When $J_H^k$ is nonsingular, the update with
$\alpha_k = 1$ gives
$(x^{k+1}, \lambda^{k+1})
= (x^k, \lambda^k) + (d_x^k, d_\lambda^k)$.
Using \eqref{newt_iter_direction} and $H(\bar{x},\bar{\lambda}) = 0$:
\begin{align*}
\|(x^{k+1},\lambda^{k+1}) - (\bar{x},\bar{\lambda})\|
&= \|(x^k,\lambda^k) - (\bar{x},\bar{\lambda})
   + (d_x^k, d_\lambda^k)\| \\
&= \|(x^k,\lambda^k) - (\bar{x},\bar{\lambda})
   - (J_H^k)^{-1} H^k\| \\
&\leq \|(J_H^k)^{-1}\|\,
   \|H^k - H(\bar{x},\bar{\lambda})
   - J_H^k
     \bigl((x^k,\lambda^k) - (\bar{x},\bar{\lambda})\bigr)\| \\
&\leq C_2\, O\!\left(
   \|(x^k,\lambda^k)
   - (\bar{x},\bar{\lambda})\|^2\right),
\end{align*}
where the last inequality follows from the strong semi-smoothness of
$H$ (Lemma~\ref{lema_H_semismooth}).

\medskip
\noindent\textbf{(ii) Singular case.}
When $J_H^k$ is singular, the direction is given by
\eqref{newt_iter_singular}. Denoting
\[
A^k := (J_H^k)^T J_H^k
  + \sqrt{\theta^k}\,\Id,
\]
the update with $\alpha_k = 1$ yields
\begin{align*}
\|(x^{k+1},\lambda^{k+1}) - (\bar{x},\bar{\lambda})\| &= \|(x^k,\lambda^k) - (\bar{x},\bar{\lambda})- (A^k)^{-1} (J_H^k)^T H^k\| \\
&= \|(A^k)^{-1}\|\|A^k\bigl((x^k,\lambda^k)
   - (\bar{x},\bar{\lambda})\bigr)
   - (J_H^k)^T H^k\| \\
&\leq C_3\left(\|(J_H^k)^T\|\|H^k - H(\bar{x},\bar{\lambda})
   - J_H^k \bigl((x^k,\lambda^k) - (\bar{x},\bar{\lambda})\bigr)\|\right.\\
    & \left. \qquad+ \sqrt{\theta^k}\|(x^k,\lambda^k) - (\bar{x},\bar{\lambda})\|\right) \\
&\leq C_3 C_1\, O\!\left(
   \|(x^k,\lambda^k)
   - (\bar{x},\bar{\lambda})\|^2\right)
   + C_3\, \sqrt{\theta^k}\|(x^k,\lambda^k) - (\bar{x},\bar{\lambda})\|,
\end{align*}

where we used the strong semi-smoothness of $H$ and the bound
$\|J_H(x^k,\lambda^k)\| \leq C_1$. By estimate
\eqref{sqrt_theta_order},
$\sqrt{\theta^k}
= O(\|(x^k,\lambda^k) - (\bar{x},\bar{\lambda})\|)$,
so the second term is also
$O(\|(x^k,\lambda^k) - (\bar{x},\bar{\lambda})\|^2)$.

\medskip
In both cases,
$\|(x^{k+1},\lambda^{k+1}) - (\bar{x},\bar{\lambda})\|
= O(\|(x^k,\lambda^k) - (\bar{x},\bar{\lambda})\|^2)$,
establishing quadratic convergence.
\end{proof}

\subsection{Choice of the generalized Jacobian $V_{\K}$}

The method depends explicitly on the choice of
$V_{\K}(x) \in \partial_C \Pi_{\K}(x)$. Recall that
$\partial_C \Pi_{\K}(x) = \operatorname{conv}(\partial_B \Pi_{\K}(x))$,
which yields different expressions for this linear operator depending
on the position of $x$ relative to $\K$, in particular whether
$x \in \operatorname{bd}(\K)$ or $x \in \operatorname{bd}(\K^{\circ})$.
For the second-order cone and the positive semidefinite cone,
closed-form expressions are available; see \cite[Lem.~2.6]{Kanzow:2009_4}
and \cite[Thm.~3.7]{Malick:2006_4}, respectively. The choice of
$V_{\K}(x)$ can have a noticeable impact on performance: in
\cite{Armijo:2025_4}, for instance, certain selections led to faster
iterations in higher-dimensional problems. For each experiment in
Section~\ref{section_num_exps}, we specify the generalized Jacobian
used.

\subsection{Choice of the starting point}

The convergence of the method depends on the initial point
$(x^0, \lambda^0)$. Since any solution of
\eqref{gen_equation_nonlin_cons_a}--\eqref{gen_equation_nonlin_cons_b}
satisfies both equations simultaneously, a natural strategy is to
choose an initial point that already satisfies one of them: either the
feasibility equation $g(x) - \Pi_{\K^*}(\lambda) + \lambda = 0$ or
the stationarity condition
$\nabla f(x) - (Dg(x))^* \Pi_{\K^*}(\lambda) = 0$. Another common
choice is a random starting point. In the numerical experiments of
Section~\ref{section_num_exps}, we observe that different initial
points can affect both computation time and the solution found, and
that the origin $(x^0, \lambda^0) = (0, 0)$ tends to perform well in
general. A deeper investigation of initialization strategies is left
for future work.

\subsection{Stationarity of $\theta$ and escape strategies}

A practical difficulty may arise when the method reaches a point
$(x^k, \lambda^k)$ at which $\nabla \theta(x^k, \lambda^k) = 0$ but
$\theta(x^k, \lambda^k) \neq 0$. Since $H(x^k, \lambda^k) \neq 0$,
the point is not a solution, yet no descent direction can be extracted
from the gradient. This occurs precisely when $J_H(x^k, \lambda^k)$
is singular, as $\nabla \theta(x^k, \lambda^k) = 0$ is equivalent to
\[
H(x^k, \lambda^k) \in
\Ker\!\left(J_H(x^k, \lambda^k)^T\right)
= \Ima\!\left(J_H(x^k, \lambda^k)\right)^{\perp}.
\]
In the regularized system \eqref{newt_iter_singular}, the right-hand
side becomes $J_H(x^k, \lambda^k)^T H(x^k, \lambda^k)
= \nabla \theta(x^k, \lambda^k) = 0$, so the method produces the null
direction and stalls.

To address this, we decompose $H$ into its optimality and feasibility
components:
\begin{equation}\label{H_opt_def}
H^{\mathrm{opt}}(x,\lambda)
:= \nabla f(x) - (Dg(x))^* \Pi_{\K^*}(\lambda),
\end{equation}
\begin{equation}\label{H_feas_def}
H^{\mathrm{feas}}(x,\lambda)
:= g(x) - \Pi_{\K^*}(\lambda) + \lambda,
\end{equation}
with corresponding merit functions
\begin{equation}\label{theta_opt_feas_def}
\theta^{\mathrm{opt}}(x,\lambda)
:= \tfrac{1}{2}\|H^{\mathrm{opt}}(x,\lambda)\|^2,
\qquad
\theta^{\mathrm{feas}}(x,\lambda)
:= \tfrac{1}{2}\|H^{\mathrm{feas}}(x,\lambda)\|^2.
\end{equation}
Since $\theta = \theta^{\mathrm{opt}} + \theta^{\mathrm{feas}}$, the
gradient decomposes as
\begin{equation}\label{grad_theta_decomp}
\nabla \theta(x,\lambda)
= J_{H^{\mathrm{opt}}}(x,\lambda)^T H^{\mathrm{opt}}(x,\lambda)
+ J_{H^{\mathrm{feas}}}(x,\lambda)^T H^{\mathrm{feas}}(x,\lambda).
\end{equation}
In particular, $\nabla \theta(x, \lambda) = 0$ does not require the
individual terms to vanish: it is possible that
$\nabla \theta^{\mathrm{opt}}(x, \lambda) \neq 0$ and
$\nabla \theta^{\mathrm{feas}}(x, \lambda) \neq 0$, with their sum
cancelling. This motivates the following distinction.

\begin{definition}[Stationarity and strong stationarity]
  \label{def_stationarity}
A point $(x, \lambda)$ is said to be \emph{stationary} for
\eqref{gen_equation_nonlin_cons_a}--\eqref{gen_equation_nonlin_cons_b}
if $\nabla \theta(x, \lambda) = 0$, and \emph{strongly stationary} if
$\nabla \theta^{\mathrm{opt}}(x, \lambda) = 0$ and
$\nabla \theta^{\mathrm{feas}}(x, \lambda) = 0$.
\end{definition}

Strong stationarity implies stationarity, but the converse does not
hold in general. There are, however, important cases where the two
notions coincide, such as the nearest correlation matrix problem
studied in \cite{Armijo:2025_4}.

When the method reaches a stationary point that is not strongly
stationary, the decomposition \eqref{grad_theta_decomp} provides an
escape strategy. Since at least one of the individual gradients is
nonzero, it can be used as a right-hand side in place of the vanishing
$\nabla \theta$. Specifically, we solve
\begin{equation}\label{newt_iter_singular_feas}
\left(J_H(x^k,\lambda^k)^T J_H(x^k,\lambda^k)
+ \sqrt{\theta(x^k,\lambda^k)}\,\Id\right)
(d_x^k,\, d_\lambda^k)
= -J_{H^{\mathrm{feas}}}(x^k,\lambda^k)^T
   H^{\mathrm{feas}}(x^k,\lambda^k).
\end{equation}

\begin{proposition}[Feasibility descent at non-strongly stationary
  points]\label{prop_feas_descent}
If $(x^k, \lambda^k)$ is stationary but not strongly stationary and
$\nabla \theta^{\mathrm{feas}}(x^k, \lambda^k) \neq 0$, then the
direction $(d_x^k, d_\lambda^k)$ given by
\eqref{newt_iter_singular_feas} is a descent direction for
$\theta^{\mathrm{feas}}$.
\end{proposition}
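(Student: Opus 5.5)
The plan is to reproduce, for $\theta^{\mathrm{feas}}$, the descent computation that was carried out above for $\theta$ with the regularized direction \eqref{newt_iter_singular}. The only change is that the right-hand side is now $\nabla \theta^{\mathrm{feas}}$ instead of $\nabla\theta$. The coefficient matrix is still $A^k := J_H(x^k,\lambda^k)^T J_H(x^k,\lambda^k) + \sqrt{\theta(x^k,\lambda^k)}\,\Id$, and I will use only that it is symmetric positive definite. Its specific form in terms of $J_H$ rather than $J_{H^{\mathrm{feas}}}$ plays no role.

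\textbf{Step 1: positive definiteness of $A^k$.} I first check that $\theta(x^k,\lambda^k)>0$. If $H(x^k,\lambda^k)=0$, then both $H^{\mathrm{opt}}$ and $H^{\mathrm{feas}}$ vanish. By the chain-rule expressions in \eqref{grad_theta_decomp}, both $\nabla\theta^{\mathrm{opt}}$ and $\nabla\theta^{\mathrm{feas}}$ would then vanish, so the point would be strongly stationary, contradicting the hypothesis. Hence $\sqrt{\theta(x^k,\lambda^k)}>0$. Since $J_H^T J_H$ is positive semidefinite, $A^k$ is symmetric positive definite, and so is $(A^k)^{-1}$.

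\textbf{Step 2: gradient formula for $\theta^{\mathrm{feas}}$.} Next I need the analogue of \eqref{grad_theta} for $\theta^{\mathrm{feas}}$:
\[
\nabla\theta^{\mathrm{feas}}(x,\lambda)^T w = H^{\mathrm{feas}}(x,\lambda)^T J_{H^{\mathrm{feas}}}(x,\lambda)\, w ,
\]
that is, $\nabla\theta^{\mathrm{feas}} = J_{H^{\mathrm{feas}}}^T H^{\mathrm{feas}}$. Here $J_{H^{\mathrm{feas}}} = \bigl(Dg(x),\ \Id - V_{\K^*}(\lambda)\bigr)$ is the second block row of \eqref{JH_def}. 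The proof of Lemma~\ref{lema_H_semismooth} applies verbatim to $\frac12\|H^{\mathrm{feas}}\|^2$. At differentiability points the formula is the ordinary chain rule, and it extends to an arbitrary element $V_{\K^*}(\lambda)$ by the same density argument. This gives the identification used implicitly in \eqref{grad_theta_decomp}.

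\textbf{Step 3: the descent inequality.} Write $u := J_{H^{\mathrm{feas}}}(x^k,\lambda^k)^T H^{\mathrm{feas}}(x^k,\lambda^k) = \nabla\theta^{\mathrm{feas}}(x^k,\lambda^k)$, which is nonzero by hypothesis. From \eqref{newt_iter_singular_feas} the direction is $(d_x^k,d_\lambda^k) = -(A^k)^{-1}u$. Therefore
\[
\nabla\theta^{\mathrm{feas}}(x^k,\lambda^k)^T (d_x^k,d_\lambda^k) = -u^T (A^k)^{-1} u \le -\lambda_{\min}\bigl((A^k)^{-1}\bigr)\|u\|^2 < 0 .
\]
This is exactly the descent property. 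As in Proposition~\ref{prop_descent_rho}, I could also record the sufficient-descent version with constant $\rho_k = \lambda_{\min}(A^k)$.

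\textbf{Main obstacle.} The argument is essentially routine. The only delicate point is Step 2: justifying that the generalized gradient of $\theta^{\mathrm{feas}}$ is represented by $J_{H^{\mathrm{feas}}}^T H^{\mathrm{feas}}$ for the chosen element $V_{\K^*}$, so that "descent direction" is meaningful at a nonsmooth point. I would handle it exactly as in Lemma~\ref{lema_H_semismooth}, relying on the strong semi-smoothness of $\Pi_{\K^*}$. A secondary point is ensuring $\theta>0$, which is what Step 1 provides.
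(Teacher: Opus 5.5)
Your proof is correct and follows essentially the same route as the paper. Both rely only on $A^k$ being symmetric positive definite and compute
$\nabla\theta^{\mathrm{feas}}(x^k,\lambda^k)^T(d_x^k,d_\lambda^k) = -u^T(A^k)^{-1}u = -(d_x^k,d_\lambda^k)^T A^k (d_x^k,d_\lambda^k) < 0$; the paper writes it in the last form. Your Step~1, showing $\theta(x^k,\lambda^k)>0$ (which the paper simply asserts when it calls $A^k$ positive definite), is a worthwhile extra check, and it also follows at once because $\nabla\theta^{\mathrm{feas}}\neq 0$ forces $H^{\mathrm{feas}}\neq 0$.
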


\begin{proof}
The coefficient matrix in \eqref{newt_iter_singular_feas} is symmetric
positive definite, so the direction is well defined and nonzero.
Substituting \eqref{newt_iter_singular_feas} into the directional
derivative:
\begin{align*}
\nabla \theta^{\mathrm{feas}}(x^k,\lambda^k)^T
  (d_x^k,\, d_\lambda^k)
&= H^{\mathrm{feas}}(x^k,\lambda^k)^T
   J_{H^{\mathrm{feas}}}(x^k,\lambda^k)\,
   (d_x^k,\, d_\lambda^k) \\
&= -(d_x^k,\, d_\lambda^k)^T
   \!\left(J_H(x^k,\lambda^k)^T J_H(x^k,\lambda^k)
   + \sqrt{\theta(x^k,\lambda^k)}\,\Id\right)
   (d_x^k,\, d_\lambda^k) \\
&< 0. \qedhere
\end{align*}
\end{proof}

Since $\nabla \theta^{\mathrm{opt}}(x^k, \lambda^k)^T
(d_x^k, d_\lambda^k)
= -\nabla \theta^{\mathrm{feas}}(x^k, \lambda^k)^T
(d_x^k, d_\lambda^k) > 0$,
this direction is an ascent direction for $\theta^{\mathrm{opt}}$.
We therefore prioritize reducing the feasibility residual, since
$\theta^{\mathrm{opt}}$ can reach non-optimal strongly stationary
points, as observed in the numerical experiments of
Section~\ref{section_num_exps}.

\begin{remark}
Handling strongly stationary points that are not solutions remains an
open question. While the decomposition above provides an escape
mechanism for non-strongly stationary points, developing strategies to
detect and escape strongly stationary points could significantly
improve the robustness of the method.
\end{remark}

We summarize the complete method in
Algorithm~\ref{alg:plucg_semismooth}. When the problem involves a
generalized simplicial cone constraint, the projection subproblem
\eqref{simpli_proj_problem} can itself be solved as a particular
instance of Algorithm~\ref{alg:plucg_semismooth}.
\begin{algorithm}[H]
\caption{Semi-smooth Newton method for NCP}\label{alg:plucg_semismooth}
\begin{algorithmic}
\Require $c \in (0, \tfrac{1}{2})$,\;
  $(x^0, \lambda^0) \in \X \times \Y$,\;
  $\texttt{tol} > 0$,\;
  $\texttt{dtol} > 0$,\;
  $\texttt{maxiter} \in \N$,\;
  $\texttt{maxiter\_ls} \in \N$
\State $k \gets 0$
\State Compute $\Pi_{\K^*}(\lambda^0)$,\;
  $H(x^0, \lambda^0)$,\; and\;
  $\theta(x^0, \lambda^0)
  = \tfrac{1}{2}\|H(x^0, \lambda^0)\|^2$
\While{$\|H(x^k, \lambda^k)\| \geq \texttt{tol}$ \textbf{and}
  $k < \texttt{maxiter}$}
  \State Compute $J_H(x^k, \lambda^k)$
  \State Solve\;
    $J_H(x^k, \lambda^k)\, (d_x^k, d_\lambda^k)
    = -H(x^k, \lambda^k)$
  \If{$\nabla \theta(x^k, \lambda^k)^T (d_x^k, d_\lambda^k)
    \geq 0$ \textbf{or}
    $\|(d_x^k, d_\lambda^k)\| < \texttt{dtol}$}
    \If{$\nabla \theta(x^k, \lambda^k) \neq 0$}
      \State Solve\;
        $\left(J_H(x^k, \lambda^k)^T J_H(x^k, \lambda^k)
        + \sqrt{\theta(x^k, \lambda^k)}\,\Id\right)
        (d_x^k, d_\lambda^k)
        = -\nabla \theta(x^k, \lambda^k)$
    \ElsIf{$\nabla \theta^{\mathrm{feas}}(x^k, \lambda^k)
      \neq 0$}
      \State Solve\;
        $\left(J_H(x^k, \lambda^k)^T J_H(x^k, \lambda^k)
        + \sqrt{\theta(x^k, \lambda^k)}\,\Id\right)
        (d_x^k, d_\lambda^k)
        = -\nabla \theta^{\mathrm{feas}}(x^k, \lambda^k)$
    \Else
      \State Strongly stationary point found. \textbf{Terminate.}
    \EndIf
  \EndIf
  \State $\alpha_k \gets 1$,\; $\ell \gets 0$
  \While{$\theta(x^k + \alpha_k\, d_x^k,\,
    \lambda^k + \alpha_k\, d_\lambda^k)
    > \theta(x^k, \lambda^k)
    + c\, \alpha_k\, \nabla \theta(x^k, \lambda^k)^T
      (d_x^k, d_\lambda^k)$
    \textbf{and} $\ell < \texttt{maxiter\_ls}$}
    \State $\alpha_k \gets \alpha_k / 2$
    \State $\ell \gets \ell + 1$
  \EndWhile
  \State $x^{k+1} \gets x^k + \alpha_k\, d_x^k$
  \State $\lambda^{k+1} \gets \lambda^k
    + \alpha_k\, d_\lambda^k$
  \State Compute $\Pi_{\K^*}(\lambda^{k+1})$,\;
    $H(x^{k+1}, \lambda^{k+1})$,\; and\;
    $\theta(x^{k+1}, \lambda^{k+1})$
  \State $k \gets k + 1$
\EndWhile
\end{algorithmic}
\end{algorithm}

\section{Numerical Experiments} \label{section_num_exps}

We evaluate the performance of the semi-smooth Newton method (SSN) on
instances of the circular cone programming and low-rank matrix
completion problems. The experiments in
Subsection~\ref{circ_cone_numexp} were conducted using MATLAB R2022b
on an Intel Core i7-8700 CPU @ 3.20\,GHz with 16\,GB of RAM, and
those in Subsection~\ref{lrank_prob} on an Intel Core i9-12900K CPU @ 3.20\,GHz with 128\,GB of RAM. Tolerances and parameters specific to
each problem are provided in the corresponding subsections.

\subsection{Circular cone programming}\label{circ_cone_numexp}

Circular cone programming is an active area of research, with various
smoothing methods proposed for solving it; see, for example,
\cite{Chi:2016_4, Tang:2022_4}. This problem is a natural instance of
the generalized simplicial conic programming framework developed in
this paper: the circular cone $\LL^n_{\omega}$ is the image of the
second-order cone $\LL^n$ under a linear transformation, so the
projection machinery of Section~\ref{section_gensimpli_properties}
applies directly. The problem reads
\begin{equation}\label{circ_cone_prob}
\begin{array}{cl}
\min & c^T x \\
\text{s.t.} & Ax = b \\
            & x \in \LL^n_{\omega},
\end{array}
\end{equation}
where the circular cone of half-aperture $\omega \in (0, \tfrac{\pi}{2})$
is
\begin{equation}\label{circ_cone_def}
\LL^n_{\omega}
:= \left\{x = (x_1, u) \in \R \times \R^{n-1}
   : \|u\| \leq x_1 \tan \omega \right\}.
\end{equation}
Setting
\begin{equation}\label{circ_cone_M}
M = \begin{pmatrix}
\cot \omega & 0^T \\
0 & \Id
\end{pmatrix},
\end{equation}
we have $\LL^n_{\omega} = M\LL^n$. When $\omega = \frac{\pi}{4}$,
$\LL^n_{\omega}$ reduces to the standard second-order cone $\LL^n$.
The dual cone is
\begin{equation}\label{circ_cone_dual}
(\LL^n_{\omega})^*
= \left\{x = (x_1, u) \in \R \times \R^{n-1}
   : \|u\| \leq x_1 \cot \omega \right\}.
\end{equation}

Following \cite{Tang:2022_4}, we generate $10$ random instances for
each configuration and compare SSN with the smoothing Newton method of
\cite{Tang:2022_4}, hereafter denoted TZ. We test four half-aperture
angles $\omega \in \bigl\{\frac{\pi}{12},\, \frac{\pi}{6},\,
\frac{\pi}{4},\, \frac{\pi}{3}\bigr\}$.
The parameters for TZ are $\lambda = 0.5$, $\delta = 0.8$,
$\mu_0 = 10^{-2}$, $\gamma = 10^{-3}$, $\eta_k = 0.95^k$; for SSN
we set $c = 0.1$. Both methods use $\texttt{maxiter} = 100$,
$\texttt{maxiter\_ls} = 20$, and $\texttt{tol} = 10^{-8}$. For
$n = 1000$ both methods are compared; for $n = 5000$ and $n = 7500$
only SSN is run, as the execution times of TZ become prohibitive.

We test four starting points:
\begin{itemize}
\item \textbf{SP0:} $x^0 = 0$, $\lambda^0 = 0$;
\item \textbf{SP1:} $x^0 \in \operatorname{int}(\LL^n_{\omega})$,
  $\lambda^0 = 0$;
\item \textbf{SP2:} $x^0 = e_1$ (first canonical vector),
  $\lambda^0 = \mathbf{1}$ (vector of ones);
\item \textbf{SP3:} $x^0$ and $\lambda^0$ drawn uniformly at random.
\end{itemize}
In the tables that follow, each row corresponds to a problem instance
and each column pair to an angle, with the SSN result on the left and
the TZ result on the right. 

\begin{table}[H]
\centering
\resizebox{\textwidth}{!}{\begin{tabular}{|ll|ll|ll|ll|}
\hline
\multicolumn{2}{|c|}{$\omega = \frac{\pi}{12}$} & \multicolumn{2}{c|}{$\omega = \frac{\pi}{6}$} & \multicolumn{2}{c|}{$\omega = \frac{\pi}{4}$} & \multicolumn{2}{c|}{$\omega = \frac{\pi}{3}$}
\\\hline
\multicolumn{2}{|c|}{SSN - TZ} & \multicolumn{2}{c|}{SSN - TZ} & \multicolumn{2}{c|}{SSN - TZ} & \multicolumn{2}{c|}{SSN - TZ}
\\\hline
2.4955e-09 & 1.4267e-07 & 1.76e-09   & 2.521e-12  & 1.4894e-12 & 1.4853e-12 & 1.3743e-10 & 2.0924e-11
 \\
9.1313e-09 & 2.4828     & 2.5711e-09 & 2.3644e-12 & 1.473e-12  & 1.4375e-12 & 1.8208e-12 & 1.4809e-11
 \\
3.1483e-09 & 1.5277e-07 & 1.5646e-12 & 3.9581e-12 & 1.3539e-12 & 1.4404e-12 & 7.5853e-12 & 5.755e-11
 \\
9.0684e-10 & 1.7432e-12 & 1.744e-12  & 6.2934e-12 & 1.5753e-12 & 1.4228e-12 & 3.4163e-11 & 1.5875e-10
 \\
1.976e-09  & 1.724e-12  & 1.4995e-12 & 3.0344e-12 & 1.4763e-12 & 1.4707e-12 & 3.2682e-12 & 1.7282e-11 
 \\
9.0142e-10 & 1.7609e-12 & 1.5669e-12 & 3.147e-12  & 1.4123e-12 & 1.4878e-12 & 2.4459e-10 & 2.4211e-11 
\\
5.8438e-09 & 2.0803e-07 & 1.5131e-12 & 3.5226e-11 & 1.4924e-12 & 1.4186e-12 & 8.7299e-11 & 5.0756e-12
 \\
2.8914e-09 & 1.8417e-12 & 1.7722e-12 & 1.0519e-11 & 1.5216e-12 & 1.567e-12  & 1.5005e-12 & 6.1043e-12
 \\
2.8631e-09 & 1.8216e-12 & 1.5705e-12 & 4.0537e-12 & 1.8327e-12 & 1.5473e-12 & 1.0723e-09 & 2.498e-10
 \\
1.5002e-09 & 0.0010149  & 1.8707e-12 & 5.0224e-12 & 1.5909e-12 & 1.5174e-12 & 4.5275e-10 & 5.0215e-11
 \\\hline
\end{tabular}}
\caption{Final residuals $\|H(x,\lambda)\|$ for SP0,
  $n = 1000$.}\label{table_resSP0n1000}
\end{table}

\begin{table}[H]
\centering
\resizebox{0.75\textwidth}{!}{\begin{tabular}{| c c | c c | c c | c c | c c |}
\hline
\multicolumn{2}{|c|}{$\omega = \frac{\pi}{12}$} & \multicolumn{2}{c|}{$\omega = \frac{\pi}{6}$} & \multicolumn{2}{c|}{$\omega = \frac{\pi}{4}$} & \multicolumn{2}{c|}{$\omega = \frac{\pi}{3}$}
\\\hline
\multicolumn{2}{|c|}{SSN - TZ} & \multicolumn{2}{c|}{SSN - TZ} & \multicolumn{2}{c|}{SSN - TZ} & \multicolumn{2}{c|}{SSN - TZ}
\\\hline
1.0785 & 169.84 & 1.0446 & 102.34 & 0.88914 & 45.077 & 1.1596 & 1.6865
 \\
1.0415 & 177.16 & 1.0296 & 104.05 & 0.89731 & 47.025 & 1.0635 & 1.6968
 \\
1.0281 & 172.22 & 1.0072 & 104.81 & 0.86835 & 48.52  & 1.0699 & 1.6405 
 \\
1.0514 & 169.39 & 1.0422 & 108.36 & 0.94868 & 50.912 & 1.0326 & 1.7228
 \\
1.0542 & 168.38 & 1.1418 & 105.37 & 0.89336 & 47.222 & 1.0645 & 1.649 
 \\
1.0847 & 168.37 & 1.0181 & 102.05 & 0.88969 & 51.721 & 1.1554 & 1.6744
\\
1.0634 & 173.52 & 1.036  & 103.98 & 1.2417  & 51.976 & 1.1675 & 1.6994
\\
1.0221 & 169.38 & 1.0185 & 102.68 & 0.91758 & 49.256 & 1.1858 & 1.9668
 \\
1.0259 & 167.09 & 1.0573 & 104.66 & 0.88781 & 46.547 & 1.206  & 1.6999
 \\
1.0396 & 172.48 & 1.0945 & 105.72 & 0.86437 & 48.189 & 1.2136 & 1.7597
 \\\hline
\end{tabular}}
\caption{Computation times (seconds) for SP0,
  $n = 1000$.}\label{table_timesSP0n1000}
\end{table}

Table~\ref{table_resSP0n1000} reports the final residuals
$\|H(x, \lambda)\|$ for starting point SP0. Both methods achieve the
desired precision for most angles; however, for
$\omega = \frac{\pi}{12}$, SSN consistently produces smaller
residuals. The computation times in
Table~\ref{table_timesSP0n1000} reveal a substantial advantage for
SSN: for $\omega = \frac{\pi}{6}$ and $\frac{\pi}{4}$, SSN is
approximately two orders of magnitude faster. Even for
$\omega = \frac{\pi}{3}$, where both methods converge quickly, SSN
remains faster. Regarding iteration counts, SSN required at most $8$
iterations for every instance where it converged, whereas TZ required
up to $65$, $34$, and $7$ iterations for
$\omega = \frac{\pi}{6}$, $\frac{\pi}{4}$, and $\frac{\pi}{3}$,
respectively.

We also ran the same instances starting from SP1. For angles
$\omega = \frac{\pi}{12}$, $\frac{\pi}{6}$, and $\frac{\pi}{4}$, SSN
encountered strongly stationary points with
$\nabla \theta(x, \lambda) = 0$ but $\theta(x, \lambda) \neq 0$, and
thus failed to find optimal solutions, while TZ converged
successfully. For $\omega = \frac{\pi}{3}$, both methods converged,
with SSN being faster.

For $n = 5000$ with starting point SP0, the results were
satisfactory: the average computation times were approximately $58$, $48$,
$43$, and $58$ seconds for
$\omega = \frac{\pi}{12}$, $\frac{\pi}{6}$, $\frac{\pi}{4}$, and
$\frac{\pi}{3}$, respectively. Residuals were of order $10^{-11}$ for
the first three angles (with one exception at $10^{-8}$) and
$10^{-10}$ for $\omega = \frac{\pi}{3}$. For $n = 7500$ with SP0, the
average residual was $3.7 \times 10^{-10}$ and computation times were
approximately $186$, $162$, $138$, and $174$ seconds for each angle. The
behavior for other starting points was consistent with the $n = 1000$
case: SP1 led to strongly stationary points, while SP2 and SP3
performed well.

We also run both methods with $\omega = \frac{5\pi}{12}$ and $n=1000$ with different starting points, however neither SSN nor TZ were able to converge within the iteration limit.

\begin{figure}[H]
\centering
\includegraphics[scale=0.33]{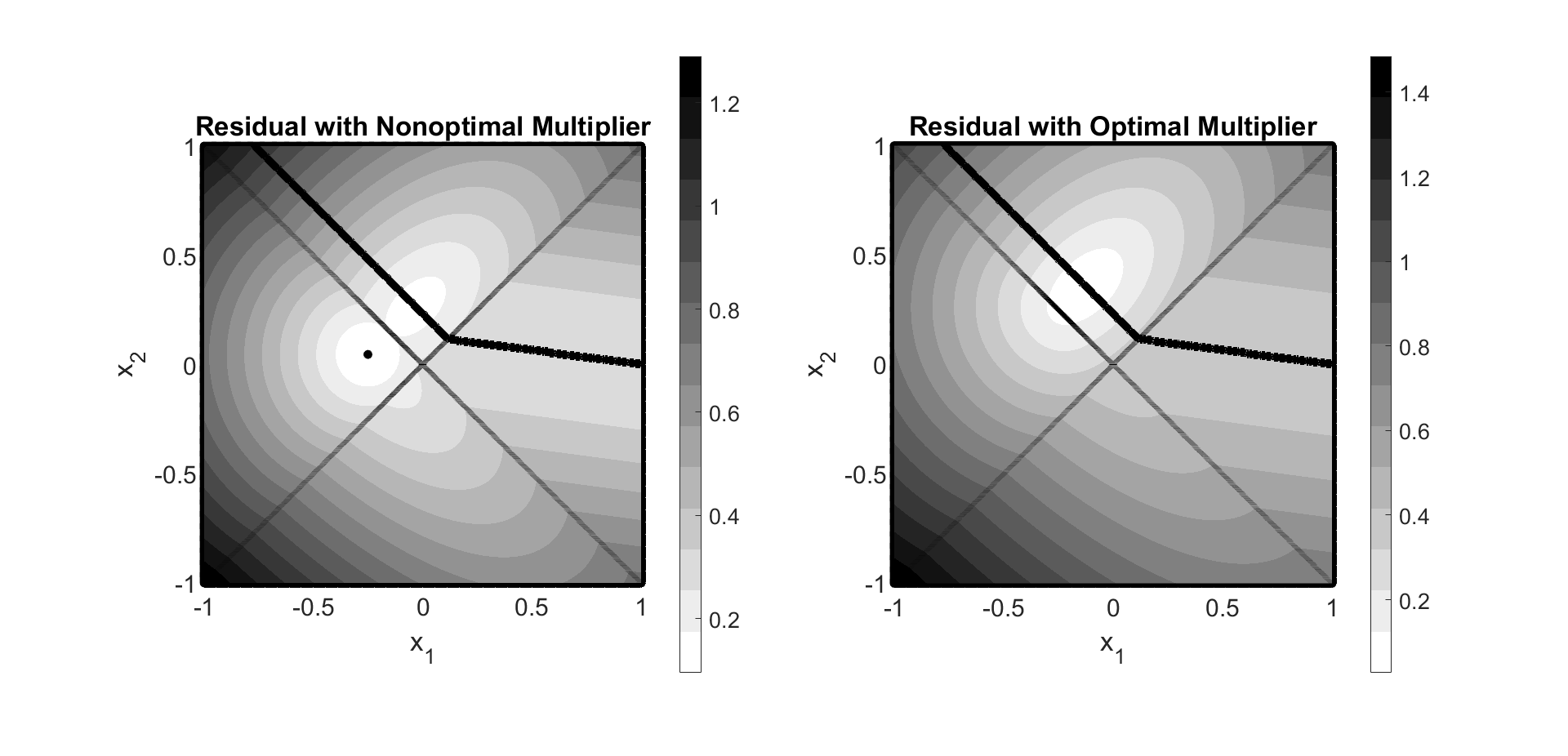}
\caption{Stationary point for $\theta(x,\lambda)$ with a non optimal and a optimal multiplier.}
\label{circularcone_nonopt_v_opt}
\end{figure}

We conclude this subsection with a remark on the role of strongly stationary points.
Figure~\ref{circularcone_nonopt_v_opt} illustrates how the landscape of stationary
points depends on the multiplier. For a second-order conic problem in dimension two,
we fix two multipliers: one optimal and one nonoptimal. The boundary of the
second-order cone $\LL^n$ is shown as a dark gray crossed line passing through the origin;
the sublevel sets of the residual
$\|H(x,\lambda)\|$ are displayed in shades of gray (decreasing from black to white),
and the points satisfying the feasibility condition
$A\Pi_{\LL^n}(x) - b = 0$ are shown in a continous black line.
The right plot illustrates how optimality and feasibility meet at a stationary point.
In contrast, the left panel shows that, with a nonoptimal multiplier,
the method can converge to a strongly stationary point that is not a solution
(isolated point highlighted in black). Developing strategies to detect and escape such points
remains an open question and could further improve the method’s performance.

\subsection{Low-rank matrix completion}\label{lrank_prob}

We now apply the semi-smooth Newton method to the low-rank matrix
completion problem. Finding high-precision solutions is challenging due
to the combinatorial nature of the rank constraint; significant efforts
have been made in \cite{Bertsimas:2021_4, Bertsimas:2023_4}. In
\cite{Bertsimas:2021_4}, a reformulation was introduced that replaces
the rank constraint with a matricial quadratic constraint and a trace
constraint via the Stiefel manifold, enabling the application of
continuous optimization methods. The original problem is
\begin{equation}\label{lowrnk_orig}
\begin{array}{cl}
\min & \dfrac{1}{2}\|H \circ X - G\|^2 \\[6pt]
\text{s.t.} & \operatorname{rank}(X) \leq \sigma,
\end{array}
\end{equation}
where $\sigma$ is a positive integer, $G \in \R^{n \times n}$ is the
data matrix, and $H \in \{0,1\}^{n \times n}$ is the mask matrix
encoding the observed entries: $H_{ij} = 1$ if $G_{ij}$ is known and
$H_{ij} = 0$ otherwise, with $\circ$ denoting the Hadamard (entrywise)
product. By \cite[Prop.~1]{Bertsimas:2021_4}, problem
\eqref{lowrnk_orig} is equivalent to
\begin{equation}\label{lowrnk_com_prob}
\begin{array}{cl}
\min & \dfrac{1}{2}\|H \circ X - G\|^2 \\[6pt]
\text{s.t.} & X - YX = 0 \\
            & Y^2 - Y = 0 \\
            & \operatorname{Tr}(Y) \leq \sigma \\
            & Y \in \sym^n,\quad X \in \R^{n \times n}.
\end{array}
\end{equation}

In \cite{Bertsimas:2021_4}, the authors address a regularized variant of problem
\eqref{lowrnk_com_prob} using two complementary approaches.
First, they solve a convex relaxation based
on semidefinite programming, which provides lower bounds and
high-quality approximate solutions. Second, they reformulate the problem
within their mixed-projection conic optimization framework and
solve it via a discrete optimization approach, namely an outer-approximation
algorithm embedded within a branch-and-bound scheme, yielding global
certifiable optimality.

There are two main differences between their approach and ours.
First, in \cite{Bertsimas:2021_4}, the problem is regularized with a Frobenius norm term,
which induces strong convexity in the primal variable and ensures
strong duality and dual attainment for the resulting convex subproblems
arising from the projection-based reformulation. This property is essential
for deriving the saddle-point representation exploited in their outer-approximation
framework. In contrast, we aim to solve the original problem \eqref{lowrnk_com_prob}
without such regularization. Second, due to the nature of the semismooth Newton
SSN method, our approach is designed to compute first-order
stationary points, rather than globally optimal solutions.
In particular, this difference suggests a possible hybrid strategy: the approximate
solutions obtained via the SDP relaxation in \cite{Bertsimas:2021_4}
can be used as warm starts, which may then be refined to high
precision using the SSN method.

For the tests we generate instances following \cite{Bertsimas:2021_4}. For each
combination of sparsity level $p \in \{10\%, 20\%, 30\%\}$ and rank
bound $\sigma \in \{1, 2, 3\}$, we solve $20$ random instances of
dimension $n = 10$, giving $180$ problems in total. SSN is terminated
when either the residual or the progress between consecutive iterates
falls below $10^{-8}$.

Since the sparsity levels are low and the rank constraint is tight, we
employ two initialization strategies. The first selects the row of the
incomplete matrix $G$ with the largest Frobenius norm and constructs a
rank-one initial matrix from it. The second generates perturbations of
$G$ with magnitudes depending on the sparsity level, aiming to explore
neighborhoods of different stationary points. For each instance, we
report the best outcome of the two strategies.

\begin{table}[ht]
\centering
\begin{tabular}{|c|c|c|c|c|c|c|}\hline
  & \multicolumn{3}{c|}{$\sigma = 1$}
  & \multicolumn{3}{c|}{$\sigma = 2$} \\
\cline{2-4}\cline{5-7}
$p$ & 0.1 & 0.2 & 0.3
    & 0.1 & 0.2 & 0.3 \\\hline
& $\leq$ 20s\;--\;100\%
& $\leq$ 32s\;--\;70\%
& $\leq$ 200s\;--\;70\%
& $\leq$ 20s\;--\;100\%
& $\leq$ 60s\;--\;75\%
& $\leq$ 500s\;--\;70\% \\
& --
& $\leq$ 500s\;--\;85\%
& $\leq$ 400s\;--\;85\%
& --
& $\leq$ 800s\;--\;90\%
& $\leq$ 6600s\;--\;85\% \\
& --
& $\leq$ 745s\;--\;90\%
& --
& --
& $\leq$ \(\text{T}_{\text{max}}\)\;--\;100\%
& $\leq$ \(\text{T}_{\text{max}}\)\;--\;95\% \\\hline
\end{tabular}

\medskip

\begin{tabular}{|c|c|c|c|}\hline
  & \multicolumn{3}{c|}{$\sigma = 3$} \\
\cline{2-4}
$p$ & 0.1 & 0.2 & 0.3 \\\hline
& $\leq$ 10s\;--\;90\%
& $\leq$ 121s\;--\;75\%
& $\leq$ 431s\;--\;55\% \\
& $\leq$ \(\text{T}_{\text{max}}\)\;--\;100\%
& $\leq$ 862s\;--\;90\%
& $\leq$ \(\text{T}_{\text{max}}\)\;--\;70\% \\
& --
& $\leq$ \(\text{T}_{\text{max}}\)\;--\;95\%
& -- \\\hline
\end{tabular}
\caption{Cumulative percentages of instances solved within given time
  thresholds, by rank bound $\sigma$ and sparsity level
  $p$.}\label{table_lowrank_times}
\end{table}

The average \(\text{T}_{\text{max}}\)
for \(\sigma = 2, 3\) was 10000s and 6500s, respectively. However,
as shown in Table~\ref{table_lowrank_times}, most of the problems were solved
in considerably less time.
In constrast in Table 5, from \cite{Bertsimas:2021_4}, for solving
the generated instances their method needed, averaging all problems,
times of the order of 2384s, 135226s and 130535s for \(\sigma = 1, 2, 3\),
respectively. In Table 5 of \cite{Bertsimas:2021_4}, the authors present a
more comprehensive set of experiments on problems of dimensions 20 and 30,
which they were able to solve globally with certifiable optimality.
In their numerical experiments, they also evaluate the scalability of
their SPD-based relaxation on problems with dimensions up to $n=600$,
obtaining good results.
Out of the $180$ instances tested, SSN found a stationary point in $169$,
a success rate of approximately $94$\%.
This is notable given the combinatorial complexity of the underlying problem.
As expected, the computation time increases with both the sparsity level
and the rank bound, but the method solved the majority of instances within a
reasonable time. Among the 11 unsolved instances, the method typically
stalled due to small progress in the residual, but still achieved
residuals on the order of $10^{-4}$, a reasonable approximation,
particularly since the residuals were not normalized.

\newpage
\section{Concluding Remarks} \label{section_conlud_remarks}

In this work we extended Robinson's normal equations to the nonlinear
conic programming setting, introducing what we call the \emph{conic
projection equations}, and established their equivalence with the
first-order KKT conditions of the problem. Building on this characterization, we proposed a semi-smooth Newton
method and proved its local
quadratic convergence, extending
previous results from quadratic linearly constrained problems to the
general nonlinear conic setting. In the context of generalized
simplicial cones, we established the strong semi-smoothness of the
projection operator, including the
case of rank-deficient linear mappings.

The method was tested on two classes of problems. For circular cone
programming, comparisons with the smoothing method of
\cite{Tang:2022_4} across different half-aperture angles showed that
SSN achieves comparable or better precision while being substantially
faster, particularly in higher dimensions. For the low-rank matrix
completion problem, SSN found high-precision KKT points for the
majority of instances, demonstrating that continuous
optimization methods based on the reformulation of
\cite{Bertsimas:2021_4} offer a viable alternative to combinatorial
approaches.

An important direction for future work is the development of
strategies to detect and escape strongly stationary points that are not
solutions, a phenomenon observed in some of our experiments. Addressing
this issue could further broaden the applicability and robustness of
the method.

\subsection*{Aknowledgments}
This work was supported in part by U.S. National Science Foundation NSF,
United States of America, under the
Grant DMS-2307328, by Conselho Nacional de Desenvolvimento Científico
e Tecnológico CNPq, Brazil, and by Fundação de Amparo à Pesquisa do Estado
de São Paulo FAPESP, Brazil, under the Grants 2019/13096-2 and 2018/24293-0.

\end{document}